\tikzset{shorten <>/.style={shorten >=#1,shorten <=#1}}
\tikzstyle{circ}=[fill=white, draw=black, shape=circle]
\tikzstyle{blank}=[fill=white, draw=white, shape=circle]
\tikzstyle{none}=[fill=none, draw=none]
\tikzstyle{copy}=[fill=white, draw=black, shape=circle, minimum height=0.2cm, inner sep=0]
\tikzstyle{varCopy}=[fill=black, draw=black, shape=circle, minimum height=0.2cm, inner sep=0]
\tikzstyle{delete}=[fill=black, draw=black, shape=circle, minimum height=0.2cm, inner sep=0]
\tikzstyle{1morph1}=[fill=white, draw=black, shape=rectangle, minimum width=1cm, minimum height=1cm]
\tikzstyle{1morph}=[fill=white, draw=black, shape=rectangle, minimum width=0.75cm, minimum height=0.75cm, inner sep=0.1cm]
\tikzstyle{2morph2}=[fill=white, draw=black, shape=rectangle, minimum width=1cm, minimum height=2cm]
\tikzstyle{2morph}=[fill=white, draw=black, shape=rectangle, minimum width=1cm, minimum height=1.25cm, inner sep=0.1cm]
\tikzstyle{nmorph}=[fill=white, draw=black, shape=rectangle, minimum height=6cm, minimum width=1cm, inner sep=0.1cm]
\tikzstyle{1state}=[fill=white, draw=black, regular polygon, regular polygon sides=3, minimum height=0.5cm, regular polygon rotate=-30]
\tikzstyle{dbox}=[fill=white, draw=black, dashed, shape=rectangle, minimum width=2cm, minimum height=1cm, inner sep=0.1cm]
\tikzstyle{vdbox}=[fill=white, draw=black, dashed, shape=rectangle, minimum width=2cm, minimum height=1.5cm, inner sep=0.1cm]
\tikzstyle{bigbox}=[fill=white, draw=black, dashed, shape=rectangle, minimum width=2cm, minimum height=4cm, inner sep=0.1cm]
\tikzstyle{2state}=[inner sep=0.05cm, fill=white, draw=black, isosceles triangle, minimum width=1.25cm, isosceles triangle apex angle=90, shape border rotate=180]
\tikzstyle{var2state}=[inner sep=0.05cm, fill=white, draw=black, isosceles triangle, minimum width=1.25cm, isosceles triangle apex angle=60, shape border rotate=180]
\tikzstyle{g2state}=[inner sep=0.05cm, fill=white, draw=black, isosceles triangle, minimum width=6cm, isosceles triangle apex angle=110, shape border rotate=180]
\tikzstyle{bigstate}=[inner sep=0.05cm, fill=white, draw=black, isosceles triangle, minimum width=3cm, isosceles triangle apex angle=110, shape border rotate=180]
\tikzstyle{bigeffect}=[inner sep=0.05cm, fill=white, draw=black, isosceles triangle, minimum width=3cm, isosceles triangle apex angle=110]
\tikzstyle{g2effect}=[inner sep=0.05cm, fill=white, draw=black, isosceles triangle, minimum width=6cm, isosceles triangle apex angle=110]
\tikzstyle{2effect}=[inner sep=0.05cm, fill=white, draw=black, isosceles triangle, minimum width=1.25cm, isosceles triangle apex angle=90]
\tikzstyle{b2effect}=[inner sep=0.05cm, fill=white, draw=black, isosceles triangle, minimum width=2cm, isosceles triangle apex angle=90]
\tikzstyle{Medium box}=[fill=white, draw=black, shape=rectangle, tikzit shape=rectangle, minimum width=1.5cm, minimum height=1.5cm]
\tikzstyle{equals}=[double, thick, -]
\tikzstyle{midArrow}=[-, decoration={{markings,mark=at position .5 with {\arrow{>}}}}, postaction=decorate]
\tikzstyle{Black arrow}=[->]
\tikzstyle{Gray line}=[-, draw={rgb,255: red,191; green,191; blue,191}, line width=0.8]
\tikzstyle{arrow}=[->]
\theoremstyle{definition}
\newtheorem{definition}{Definition}
\newtheorem{remark}[definition]{Remark}
\theoremstyle{plain}
\newtheorem{proposition}[definition]{Proposition}
\newtheorem{lemma}[definition]{Lemma}
\newtheorem{corollary}[definition]{Corollary}
\newcommand{\asequal}[1]{\ensuremath{\cong_{#1}}}
\newcommand{\Ca}{\mathcal{C}}
\newcommand{\Set}{\mathbf{Set}}
\newcommand{\Cat}{\mathbf{Cat}}
\newcommand{\Fam}{\mathbf{Fam}}
\newcommand{\comp}{\fatsemi}
\newcommand{\id}{\mathrm{id}}
\newcommand{\inv}{\ensuremath{^{-1}}}
\newcommand{\op}{\ensuremath{^{\mathrm{op}}}}
\newcommand{\dup}{\operatorname{copy}}
\newcommand{\BayesChart}{\mathbf{BayesChart}}
\newcommand{\BayesLens}{\mathbf{BayesLens}}
\newcommand{\diset}[2]{\binom{#1}{#2}}
\newcommand{\chartto}{\rightrightarrows}
\newcommand{\lensto}{\rightleftarrows}
\newcommand{\Stat}{\mathbf{Stat}}
\title{Dependent Bayesian Lenses: Categories of Bidirectional Markov Kernels with Canonical Bayesian Inversion}
\author{Dylan Braithwaite
\institute{MSP Group \\ University of Strathclyde \\ Glasgow, Scotland}
\email{dylan.braithwaite@strath.ac.uk}
\and
Jules Hedges
\institute{MSP Group \\ University of Strathclyde \\ Glasgow, Scotland}
\email{jules.hedges@strath.ac.uk}
}
\begin{document}
\maketitle

\begin{abstract}
We generalise an existing construction of Bayesian Lenses to admit lenses between pairs of objects where the backwards object is dependent on states on the forwards object (interpreted as probability distributions). This gives a natural setting for studying stochastic maps with Bayesian inverses restricted to the points supported by a given prior. In order to state this formally we develop a proposed definition by Fritz
\cite{synthetic_approach}
of a support object in a Markov category and show that these give rise to a section into the category of dependent Bayesian lenses encoding a more canonical notion of Bayesian inversion.
\end{abstract}

\section{Introduction}

Categories of lenses provide models of bidirectional transformations between objects in a cartesian category.
However this fails to generalise to non-cartesian monoidal categories, which are of interest when studying stochastic processes.
The most common generalisation of lenses to these settings is via categories of optics, and indeed optics have been used for probabilistic settings, for example in Bayesian open games \cite{bayesian_open_games}.

Bayesian open games however, are defined relative to a fixed prior, but when studying systems with Bayesian updating in general we often want to quantify over this prior, resulting in open systems with families of inverses, parameterised by the choice of prior.
The approach taken in open games, of using optics over a Markov category, is not compatible with this form of parameterisation because Bayesian inversion with the prior as a parameter cannot in general be defined internal to the category.
In light of this, an alternative way to model Bayesian processes in a lens-like fashion is described in \cite{statistical_games}.
This uses an alternative generalisation of lenses, first described by \cite{spivak_lenses}, which notes that many categories resembling lenses can be constructed as Grothendieck constructions of pointwise opposites of indexed categories.

Bayesian lenses include lenses whose forward component is a stochastic map and whose backward component is a (family of) Bayesian inverses to the forwards component.
Indeed Smithe shows in \cite{bayesian_updates} that a Markov category embeds functorially into its category of Bayesian lenses, but this has a shortcoming in the fact that Bayesian inverses are not in general unique.
Specifically, the abstract definition for Bayesian inversion in a Markov category, due to \cite{bayesian_inversion}, does not uniquely specify a morphism because it allows for the behaviour of the map to be arbitrary on points not supported by the prior.
Hence, any such embedding functor necessitates a coherent choice of inverses for each morphism-prior pair.

In this paper we propose an a modified definition for a Bayesian inverse in a Markov category using a notion of \emph{support object}, based on a definition proposed in \cite{synthetic_approach}.
In this case Bayesian inverses between support objects are indeed unique and so give rise to a canonical Bayesian inversion functor.
To acomodate this new definition we propose a definition for \emph{dependent Bayesian lenses} where the backward object is allowed to depend on a choice of distribution over the forward object.

Having decided on the structure required from dependent Bayesian lenses, it is possible to directly modify the non-dependent version of Bayesian lenses to obtain our definition, but here we take another perspective to motivate the definition, by first considering families of support objects.
In many cases where we want to work with an object supported at an arbitrary prior, it is useful to instead consider a family of objects indexed by the collection of all possible priors.
Formalising this using the family fibration \cite[Example 8.1.9b]{fam_fibration}, we obtain an indexed category which closely resembles the $\mathbf{Stat}$ construction used in defining standard Bayesian lenses.
This not only gives a neat way of defining an indexed category for Bayesian lenses, but also justifies calling these dependent lenses, by analogy with the uses of the family fibration in dependent type theory \cite{jacobs_categorical_logic}.

\section{Markov Categories with Supports}

We begin by recalling the definition of a Markov category, due to Fritz in \cite{synthetic_approach}.
\begin{definition}
A Markov category is a symmetric monoidal category with a supply of commutative comonoids satisfying compatibility equations:
\end{definition}

\ctikzfig{./tikz/markov_cat.tikz}

This provides a simple axiomatisation for categories of probability spaces, including as examples the Kleisli category of various probability monads (such as the monad sending sets to the set of their finitely supported probability distributions), and categories of matrices with Gaussian noise.
Other examples include various measure-theoretic settings for probability, but the above examples, $\mathbf{FinStoch}$ and $\mathbf{Gauss}$, are notable in this paper because they can be easily seen to admit all support objects for states $I \to X$, as we shall discuss later.

Many important concepts from probability theory can be stated in an abstract form in a Markov category, including almost-equality and Bayes law.

\begin{definition}[Almost-sure equality]
Fix morphisms $\pi: I \to X$ and $f, g: X \to Y$.
We say that $f$ is $\pi$-almost equal to $g$ (written $f \asequal\pi g$) if there is the following equation of morphisms:
\ctikzfig{./tikz/as_equal.tikz}
\end{definition}

\begin{definition}[Bayesian Inversion]
Fix morphisms $\pi: I \to X$ and $f: X \to Y$.
A Bayesian inverse for $f$ at $\pi$ is a morphism $f^\dag_\pi: Y \to X$ satisfying the following equation:
\ctikzfig{./tikz/bayesian_inverse.tikz}
\end{definition}

Giving an abstract account of Bayes' law, the latter definition should be very important in studying Bayesian statistics categorically, but it is in some way unsatisfying because it only specifies a morphism up to almost-sure equality.
For example, considering $\mathbf{FinStoch}$, if a distribution $\pi: I \to X$ is not fully supported, then the image of a morphism $X \to Y$ is only specified by the above definition at the points in the support of $\pi$.
We can work around this ambiguity however by instead considering inverses as morphisms between objects representing the supports of distributions.
\cite{synthetic_approach} proposes a definition for support objects in a Markov category, but does not develop the idea further.
In this section we investigate some properties of support objects and give some examples of Markov categories with supports for every distribution.

\begin{definition}
Fix a state $\pi: I \to X$. An object $X_\pi$ is called a \emph{support of $\pi$} if $X_\pi$ represents the covariant functor $(\Ca(X, -) / \asequal \pi): \Ca \to \Set$.
\end{definition}

This definition seems to succinctly capture the essential properties of the support of a distribution, but it is quite opaque and does not encourage intuition.
We can however restate this as an equivalent condition involving the existence of restriction and inclusion morphisms for the object.

\begin{proposition}
$X_\pi$ is a support of $\pi: I \to X$ if and only if there is a section-retraction pair $X_\pi \xrightarrow{i} X \xrightarrow{r} X_\pi $ such that for any morphisms $f, g: X \to Y$ we have $f \asequal\pi g \iff i\comp f = i \comp g$.
\end{proposition}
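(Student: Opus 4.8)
The plan is to translate representability into a universal element via the Yoneda lemma and then identify the retraction $r$ with this universal element, constructing the section $i$ from the universal property. Throughout I rely on the standard facts that $\asequal\pi$ is an equivalence relation on each hom-set and is preserved under post-composition, so that $Y \mapsto \Ca(X,Y)/\asequal\pi$ is genuinely a functor and post-composing an almost-sure equation by any morphism preserves it. Concretely, $X_\pi$ represents $\Ca(X,-)/\asequal\pi$ exactly when there is a universal element $[r] \in \Ca(X, X_\pi)/\asequal\pi$, represented by some $r: X \to X_\pi$, such that $\alpha_Y : \Ca(X_\pi, Y) \to \Ca(X, Y)/\asequal\pi$, $\phi \mapsto [r \comp \phi]$, is a bijection natural in $Y$ (this is independent of the chosen representative of $[r]$, again since post-composition preserves $\asequal\pi$). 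Unwinding, bijectivity says that every $f : X \to Y$ satisfies $r \comp \phi \asequal\pi f$ for some $\phi$ (surjectivity), and that $r \comp \phi \asequal\pi r \comp \psi$ implies $\phi = \psi$ (injectivity).

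For the forward direction, assume $X_\pi$ is a support, take $r$ as above, and obtain $i : X_\pi \to X$ by applying surjectivity at $Y = X$ to $\id_X$, giving $r \comp i \asequal\pi \id_X$. I expect this single equation to be the workhorse of the whole argument. From it I recover $i \comp r = \id_{X_\pi}$: post-composing by $r$ yields $r \comp (i \comp r) \asequal\pi r = r \comp \id_{X_\pi}$, and injectivity forces $i \comp r = \id_{X_\pi}$. For the stated equivalence, post-composing $r \comp i \asequal\pi \id_X$ by an arbitrary $f : X \to Y$ gives $r \comp (i \comp f) \asequal\pi f$; hence if $f \asequal\pi g$ then $r \comp (i \comp f) \asequal\pi r \comp (i \comp g)$ and injectivity yields $i \comp f = i \comp g$, while conversely $i \comp f = i \comp g$ gives $f \asequal\pi r \comp i \comp f = r \comp i \comp g \asequal\pi g$ by transitivity.

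For the converse, assume the section-retraction pair with its characterising property. First I would reproduce the key equation from the hypothesis: since $i \comp (r \comp i) = (i \comp r) \comp i = i = i \comp \id_X$, the property forces $r \comp i \asequal\pi \id_X$. I then define $\alpha_Y$ by $\phi \mapsto [r \comp \phi]$ (natural in $Y$ by post-composition) and verify it is bijective. Surjectivity sends $f$ to $i \comp f$, since $r \comp i \comp f \asequal\pi f$; injectivity holds because $r \comp \phi \asequal\pi r \comp \psi$ gives, via the hypothesis, $i \comp r \comp \phi = i \comp r \comp \psi$, i.e. $\phi = \psi$ using $i \comp r = \id_{X_\pi}$. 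This exhibits the required natural isomorphism, so $X_\pi$ represents the functor.

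The main obstacle, such as it is, is bookkeeping rather than conceptual: one must track the diagrammatic composition order and the directions of the almost-sure equalities carefully, and the crux is recognising that both implications pivot on the single equation $r \comp i \asequal\pi \id_X$, which is produced from surjectivity in the forward direction and from the hypothesised property in the converse.
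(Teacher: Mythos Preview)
Your proof is correct and is essentially the same Yoneda argument as the paper's, just entered from the opposite end of the natural isomorphism: the paper takes $\Phi : (\Ca(X,-)/\asequal\pi) \to \Ca(X_\pi,-)$ and sets $i = \Phi_X([\id_X])$ first, reading off from a naturality square that $\Phi$ is precomposition by $i$ and then defining $r = \Phi^{-1}(\id_{X_\pi})$; you instead take the universal element $[r] \in \Ca(X,X_\pi)/\asequal\pi$ so that $\alpha = \Phi^{-1}$ is $[r \comp (-)]$, and recover $i$ from surjectivity at $\id_X$. Both routes pivot on the same equation $r \comp i \asequal\pi \id_X$, and your derivation of the biconditional and of $i \comp r = \id_{X_\pi}$ from injectivity is slightly more explicit than the paper's; there is no substantive difference in content.
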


\begin{proof}
Assume $X_\pi$ is a support. This means we have a natural isomorphism 
$\Phi: (\Ca(X, -)/\asequal\pi) \to \Ca(X_\pi, -) $.
We take $i = \Phi_X(\id_X)$, then we see from the following naturality square that the action of $\Phi$ must be to precompose representative morphisms with $i$:
\[\begin{tikzcd}
	\textcolor{rgb,255:red,128;green,128;blue,128}{i} &&& \textcolor{rgb,255:red,130;green,130;blue,130}{[\id_X]_{\asequal\pi}} \\
	& {\Ca(X_\pi, X)} & {\Ca(X, X)/\asequal\pi} \\
	& {\Ca(X_\pi, Y)} & {\Ca(X, Y)/\asequal\pi} \\
	\textcolor{rgb,255:red,128;green,128;blue,128}{i\comp f} &&& \textcolor{rgb,255:red,128;green,128;blue,128}{[f]_{\asequal\pi}}
	\arrow["{\Ca(X, f)/\asequal\pi}", from=2-3, to=3-3]
	\arrow["{\Ca(X_\pi, f)}"', from=2-2, to=3-2]
	\arrow["{\Phi_X}"', from=2-3, to=2-2]
	\arrow["{\Phi_Y}", from=3-3, to=3-2]
	\arrow[draw={rgb,255:red,128;green,128;blue,128}, shorten <=11pt, shorten >=11pt, maps to, from=1-4, to=4-4]
	\arrow[draw={rgb,255:red,128;green,128;blue,128}, shorten <=33pt, shorten >=33pt, maps to, from=4-4, to=4-1]
	\arrow[draw={rgb,255:red,128;green,128;blue,128}, shorten <=11pt, shorten >=11pt, maps to, from=1-1, to=4-1]
	\arrow[draw={rgb,255:red,128;green,128;blue,128}, shorten <=17pt, shorten >=33pt, maps to, from=1-4, to=1-1]
\end{tikzcd}\]
Hence we establish the property that precomposition by $i$ is an isomorphism between morphisms from $X$ and $\asequal\pi$-equivalences classes of morphisms from $X_\pi$.
We further have that $\id_{X_\pi} = \Phi(\Phi\inv(\id_{X_\pi})) = i\comp\Phi\inv(\id_{X_\pi})$, so we can take the retract to be $r = \Phi\inv(\id)$.

Conversely, given such an $i$ and $r$, it is clear that precomposition by $i$ defines a function $\Ca(X, Y) \to \Ca(X_\pi, Y)$ natural in $Y$ and the assumed property of $i$ guarantees that this is a bijection from $\asequal\pi$-equivalence classes.
Finally we have that $i\comp r\comp i = i$, so $r\comp i \asequal\pi \id_{X_\pi}$.
Hence precomposition by $r$ is an inverse to $i\comp (-): (\Ca(X, -)/\asequal\pi) \to \Ca(X_\pi, -)$.
\end{proof}
This is discussed further in \cite{structural_foundations} as well as in upcoming work \cite{upcoming_markov_cat_paper}.

While support objects are not necessarily unique, we must have that they are unique up to isomorphism, since two support objects for the same distribution must by definition represent the same presheaf.
When we disuss a fixed support object we really mean a fixed support object with a choice of section and retraction (or equivalently a choice of the representing isomorphism $\Phi$).

Now, if we have a distribution on $X$, $\pi: I \to X$, and a morphism $f: X \to Y$ we can push this forward to a distribution $\pi\comp f$ on $Y$.
So $f$ restricts to a morphism $f_\pi: X_{\pi} \to Y_{\pi\comp f}$ defined by $i_\pi \comp f \comp r_{\pi\comp f}$,
where $i_\pi$ is the section for $X_\pi$ and $r_{\pi\comp f}$ is the retraction for $Y_{\pi\comp f}$.
Based on this there is an obvious adjustment to the definition of Bayesian inversion in order to capture Bayesian inverses between supports:

\begin{definition}[Bayesian-inverse-with-support]
Fix morphisms $\pi: I \to X$ and $f: X \to Y$ with support objects $X_\pi$ and $Y_{\pi\comp f}$.
We call a morphism $f^\sharp_\pi: Y_{\pi\comp f} \to X_\pi$  a \emph{Bayesian inverse with support} if we have the following equation of morphisms:
\ctikzfig{./tikz/bayesian_inverse_with_support.tikz}
\end{definition}

And we can show that this indeed captures the same concept as the earlier version:

\begin{proposition}
\label{bayesian_inverse_bijection}
Fix morphisms $f: X \to Y$ and $\pi: I \to X$, and support objects $X_\pi$ and $Y_{f\comp\pi}$.
Then inverses-with-support of $f$ at $\pi$ are in bijection with $\asequal{\pi\comp f}$-equivalence classes of ordinary Bayesian inverses.
\end{proposition}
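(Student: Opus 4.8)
The plan is to exhibit the bijection concretely through a pair of mutually inverse \emph{restriction} and \emph{extension} operations, and then to isolate precisely which step forces the use of the Bayesian-inverse hypothesis. Write $\rho := \pi\comp f$ for the pushforward state, and let $i_\pi\colon X_\pi\to X$, $r_\pi\colon X\to X_\pi$ and $i_\rho\colon Y_\rho\to Y$, $r_\rho\colon Y\to Y_\rho$ be the chosen sections and retractions of the two support objects (so $Y_\rho = Y_{\pi\comp f}$), which by the previous Proposition satisfy $i_\pi\comp r_\pi=\id_{X_\pi}$ and $r_\pi\comp i_\pi\asequal\pi\id_X$, and symmetrically for $\rho$. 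Given a Bayesian inverse $g = f^\dag_\pi\colon Y\to X$ I would set $\alpha(g) := i_\rho\comp g\comp r_\pi\colon Y_\rho\to X_\pi$ (restrict along the supports), and given an inverse-with-support $h\colon Y_\rho\to X_\pi$ I would set $\beta(h) := r_\rho\comp h\comp i_\pi\colon Y\to X$ (extend). The goal is to show that $\alpha$ descends to $\asequal\rho$-classes, that $\alpha$ and $\beta$ land in the two constrained collections, and that they are mutually inverse there.

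Two preliminary observations make the bookkeeping go through. First, the Bayesian-inverse equation is invariant under $\asequal\rho$: its right-hand side is exactly the joint $\rho\comp\dup\comp(g\otimes\id_Y)$ that defines $g\asequal\rho g'$, so the Bayesian inverses of $f$ at $\pi$ form a union of $\asequal\rho$-classes and ``$\asequal\rho$-classes of Bayesian inverses'' is a well-formed set; moreover, by the support property $i_\rho\comp g$ depends only on $[g]$, so $\alpha$ is well-defined on classes. Second, almost-sure equality is a congruence for postcomposition (postcompose the defining joint equation by $\id\otimes c$), which I will use repeatedly. The composite $\alpha\comp\beta$ is then immediate: $\alpha(\beta(h)) = i_\rho\comp r_\rho\comp h\comp i_\pi\comp r_\pi = h$ using only $i_\rho\comp r_\rho=\id_{Y_\rho}$ and $i_\pi\comp r_\pi=\id_{X_\pi}$, so this direction needs nothing beyond the section-retraction identities. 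The genuinely content-bearing parts are (a) checking that $\alpha(g)$ satisfies the inverse-with-support equation and that $\beta(h)$ is an ordinary Bayesian inverse, and (b) the remaining composite $\beta\comp\alpha$.

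For $\beta\comp\alpha$ I compute $\beta(\alpha(g)) = r_\rho\comp i_\rho\comp g\comp r_\pi\comp i_\pi$ and must show this is $\asequal\rho g$. Since $r_\rho\comp i_\rho\asequal\rho\id_Y$, postcomposition collapses the left factor and reduces the claim to $g\comp(r_\pi\comp i_\pi)\asequal\rho g$. Here the Bayesian-inverse hypothesis enters essentially: reading off the $X$-marginal of the defining equation shows that any Bayesian inverse satisfies $\rho\comp g = \pi$ (both marginals equal the $X$-marginal of the prior-likelihood joint), so $g$ pushes $\rho$ forward to exactly $\pi$. The required step is then an instance of the stability of almost-sure equality under precomposition: if $\nu = \mu\comp k$ then $u\asequal\nu v$ implies $k\comp u\asequal\mu k\comp v$. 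Applying this with $\mu=\rho$, $k=g$, $\nu=\pi$ and the identity $r_\pi\comp i_\pi\asequal\pi\id_X$ yields $g\comp r_\pi\comp i_\pi\asequal\rho g$, completing $\beta\comp\alpha=\id$. I expect this precomposition-stability lemma to be the main obstacle, since, unlike postcomposition, it is not a formal consequence of the definition and genuinely relies on $\nu$ being the pushforward of $\mu$ along $k$; in the categories of interest ($\mathbf{FinStoch}$, $\mathbf{Gauss}$) it holds because the support of $\nu$ contains the support of every $k$-image of a point of $\mu$. The remaining obligation, translating the Bayesian-inverse equation into the inverse-with-support equation and back (part (a)), I would handle by a diagram chase transporting the two joints across $X\otimes Y$ and $X_\pi\otimes Y_\rho$ along the sections and retractions, using the defining property of supports to convert the resulting almost-sure equalities into on-the-nose equalities; the one point needing care there is the interaction of the retractions with the comonoid (copying) structure, since the retractions are not assumed deterministic.
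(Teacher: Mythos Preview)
Your maps $\alpha$ and $\beta$ are exactly the paper's $\tilde\Psi$ and $\Psi$, and the overall architecture of the argument (restrict, extend, check both round-trips) is the same. The trivial direction $\alpha\circ\beta=\id$ you spell out is left implicit in the paper, and the paper's verification that $\tilde\Psi(h)$ is an inverse-with-support is the string-diagram chase you anticipate in part~(a); your worry about non-deterministic retractions does not bite there because the whole manipulation stays at the level of joints.

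The one substantive divergence is how you handle $\beta\circ\alpha$. You reduce it to $g\comp r_\pi\comp i_\pi \asequal{\rho} g$ and then reach for a general precomposition-stability lemma (``$\nu=\mu\comp k$ and $u\asequal{\nu}v$ imply $k\comp u\asequal{\mu}k\comp v$''), which you rightly flag as not being a formal consequence of the axioms of a Markov category; it is tied to extra hypotheses such as causality and need not hold in general. The paper sidesteps this entirely: instead of using only the marginal identity $\rho\comp g=\pi$, it uses the full Bayesian-inverse equation, which equates \emph{joints}. Concretely, the defining equation lets you rewrite
\[
\rho\comp\dup_Y\comp(g\otimes\id_Y) \;=\; \pi\comp\dup_X\comp(\id_X\otimes f),
\]
so that the test for $g\comp r_\pi\comp i_\pi \asequal{\rho} g$ becomes
\[
\pi\comp\dup_X\comp\bigl((r_\pi\comp i_\pi)\otimes f\bigr) \;\stackrel{?}{=}\; \pi\comp\dup_X\comp(\id_X\otimes f),
\]
and this now follows from $r_\pi\comp i_\pi\asequal{\pi}\id_X$ using only \emph{post}composition stability. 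In other words, the Bayesian-inverse equation itself performs the base-change from $\rho$-almost-sure to $\pi$-almost-sure reasoning, so your lemma is unnecessary and the argument goes through in an arbitrary Markov category. Replace your appeal to precomposition stability with this rewrite and the proof is complete.
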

\begin{proof}

We first exhibit a map $\Psi$ from inverses-with-support to ordinary inverses.
Let $g: Y_{\pi\comp f} \to X_\pi$ be a Bayesian inverse with support of $f$.
By definition this means that $\Psi(g) = r_{\pi\comp f}\comp  g\comp i_\pi$ is an ordinary Bayesian inverse.

Conversely if $h: Y \to X$ is an ordinary Bayesian inverse, then we define $\tilde{\Psi}(h)$ to be the composition
\[
S_{\pi\comp f} 
\xrightarrow{i_{\pi\comp f}}
Y
\xrightarrow{h}
X
\xrightarrow{r_\pi}
S_\pi.
\]
We can see that this is an inverse-with-support:
\ctikzfig{./tikz/bayesian_inverse_equiv_proof.tikz}
We finally have that $\Psi(-)$ is inverse to $\tilde\Psi$ when viewed as maps to/from equivalence classes:
\begin{alignat*}{3}
	\Psi(\tilde\Psi(h)) &=& r_{\pi\comp f}\comp \tilde\Psi(h)\comp i_\pi\\
	  &=& r_{\pi\comp f}\comp i_{\pi\comp f} \comp h \comp r_\pi \comp i_\pi \\
	  &\asequal{\pi\comp f}& h \comp r_\pi \comp i_\pi \\
	  &\asequal{\pi\comp f}& h
\end{alignat*}	
where the final equivalence uses the fact that $h$ is a Bayesian inverse to move it out of the way, similarly to the previous chain of equalities.
\end{proof}

Noting that all Bayesian inverses to $f$ at $\pi$ must be $(\pi\comp f)$-almost equal we have as a corollary that Bayesian inverses with support are unique.

\begin{corollary}
Fix morphisms $f: X \to Y$ and $\pi: I \to X$, and support objects $X_\pi$ and $Y_{f\comp\pi}$.
Then there is at most one inverse-with-support to $f$ at $\pi$.
\qed
\end{corollary}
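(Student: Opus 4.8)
The plan is to obtain the statement as an immediate consequence of Proposition~\ref{bayesian_inverse_bijection}. That proposition furnishes a bijection between inverses-with-support of $f$ at $\pi$ and $\asequal{\pi\comp f}$-equivalence classes of ordinary Bayesian inverses, so counting the former reduces to counting the latter. Accordingly, the whole task collapses to a single claim: that there is \emph{at most one} such equivalence class, i.e.\ that any two ordinary Bayesian inverses $g, h: Y \to X$ of $f$ at $\pi$ are $(\pi\comp f)$-almost equal. Once this is established, the bijection transports it directly into the desired uniqueness (and if no ordinary inverse exists, the bijection equally forces the set of inverses-with-support to be empty, which is consistent with the ``at most one'' phrasing).

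First I would unfold the defining equation of a Bayesian inverse for each of $g$ and $h$. In both cases the inverse, paired with $f$ and the prior $\pi$, is required to reconstruct one and the same joint state on $X \otimes Y$: the state obtained by copying the output of $\pi$ and pushing one copy through $f$. Crucially, the $Y$-marginal of this joint state is exactly $\pi\comp f$. Thus $g$ and $h$ are nothing but two disintegrations of a common joint distribution along its fixed $Y$-marginal.

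The key step is then to invoke the almost-sure uniqueness of disintegration: two channels that disintegrate the same joint state against the same marginal must agree almost surely with respect to that marginal. Concretely, I would set the two defining equations equal (their left-hand sides both being the common joint state), and then cancel the shared data to read off the almost-sure equation $g \asequal{\pi\comp f} h$ directly from the diagrammatic manipulation. This is where the genuine content sits, and it is the only real obstacle; however, it is a standard fact in the synthetic-probability setting of \cite{synthetic_approach}, provable by routine manipulation of the comonoid (copy) structure, so I would cite or reproduce it rather than treat it as novel.

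Finally I would assemble the pieces: since all ordinary Bayesian inverses of $f$ at $\pi$ lie in a single $\asequal{\pi\comp f}$-equivalence class, and inverses-with-support are in bijection with these classes by Proposition~\ref{bayesian_inverse_bijection}, there can be at most one inverse-with-support, as claimed. The only subtlety worth flagging is the bookkeeping of marginals, namely that the relevant prior for the almost-sure equality is the pushforward $\pi\comp f$ rather than $\pi$ itself, which is precisely the prior the support object $Y_{\pi\comp f}$ is built from and hence aligns cleanly with the bijection.
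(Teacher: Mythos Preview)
Your proposal is correct and follows exactly the paper's approach: the corollary is stated with a \qed\ and justified only by the preceding remark that all Bayesian inverses to $f$ at $\pi$ are $(\pi\comp f)$-almost equal, whence Proposition~\ref{bayesian_inverse_bijection} yields uniqueness of the inverse-with-support. Your write-up simply expands the one-line observation about almost-sure uniqueness of disintegrations that the paper takes for granted.
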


This final result suggests that we may be able to define a (contravariant) functor that picks out the canonical inverse for a given morphism.
However, any such functor would necessitate a coherent assignment of priors to objects.
Really we would like a setting where we can work with inversion at arbitrary priors.
In order to define a well defined functor of this sort we can instead work with families of support objects, indexed by the distribution they are supporting.
We formalise this in the next section as an indexed category $\Ca \to \Cat$ sending $X$ to $\Ca(I, X)$-indexed families of objects.

\section{Families of Support Objects}

If $\Ca$ is a Markov category with all support objects then we can consider \emph{families of supports} at an object.
We define this in terms of the indexed category corresponding to the families fibration \cite{fam_fibration}. 
Namely this sends a set to a category of objects indexed over that set.
Specifically $\Fam_\Ca: \Set\op \to \Cat$ sends $X$ to the category whose objects are $X$-indexed families of objects of $\Ca$, and whose morphisms $A \to B$ are given by a function $f: X \to X$ and a family of $\Ca$-morphisms $A(x) \to B(f(x))$.
Alternatively this is the category $\Ca^X$ of functors $X \to \Ca$ where $X$ is viewed as a discrete category.
Then for a function $g: X \to Y$, $\Fam_\Ca(g)$ is a functor $\Ca^Y \to \Ca^X$ which acts by precomposition.

Using this, we have an indexed category $S: \Ca\op \to \Cat$ defined as so:
\[\begin{tikzcd}
	\Ca\op & \Set\op & \Cat \\
	X & {\Ca(I, X)} & {\Ca^{\Ca(I, X)}} \\
	Y & {\Ca(I, Y)} & {\Ca^{\Ca(I, Y)}}
	\arrow["{\Ca(I, -)\op}", from=1-1, to=1-2]
	\arrow["{\Fam_{\Ca}}", from=1-2, to=1-3]
	\arrow["f"', from=2-1, to=3-1]
	\arrow["{\Ca(I, f)}"', from=2-2, to=3-2]
	\arrow["{\Ca^{\Ca(I, f)}}"', from=3-3, to=2-3]
	\arrow[maps to, from=2-1, to=2-2]
	\arrow[maps to, from=3-1, to=3-2]
	\arrow[maps to, from=2-2, to=2-3]
	\arrow[maps to, from=3-2, to=3-3]
\end{tikzcd}\]
For an object $X \in \Ca$ we have that the objects of $S(X)$ are $\Ca(I, X)$-indexed families of objects.
But an inverse-with-support for $f: X \to Y$ must move between the fibres $S(X)$ and $S(Y)$.
Hence in order to represent Bayesian inverses we take the Grothendieck construction $\int \Fam_\Ca(\Ca(I, -)\op)$.
Following the conventions of Myers' program of categorical systems theory
\cite{djm_dynamical_systems}
\cite{categorical_systems_theory}
, and anticipating the later section where we will consider the fibrewise opposite of this category as a category of lenses, we refer to this as the \emph{category of (dependent) Bayesian charts}:
\[\BayesChart(\Ca) = \int_{X \in \Ca} \Fam_\Ca(\Ca(I, X)\op).\]
This category has as objects, pairs $\diset X A$ where $X$ is an object of $\Ca$ and $A$ is a $\Ca(I, X)$-indexed family of objects of $\Ca$.
Morphisms $\diset X A \chartto \diset Y B$ are given by a morphism $f: X \to Y$ in $\Ca$ and a morphism $f^\flat_\pi: A(\pi) \to B(\pi\comp f)$ for each $\pi \in \Ca(I, X)$.

Viewing Bayesian charts as a fibred-category, we can embed $\Ca$ into this category via a section of the bundle.

\begin{proposition}
The bundle of $\BayesChart(\Ca)$ over $\Ca$ has a section $T: \Ca \to \BayesChart(\Ca)$ which maps objects to families of supports and morphisms into their restrictions to the respective supports:
\end{proposition}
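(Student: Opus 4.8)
The plan is to define $T$ on objects by sending $X$ to the pair $\diset{X}{\pi \mapsto X_\pi}$, where each $X_\pi$ is the chosen support (equipped with its section $i_\pi$ and retraction $r_\pi$) of the state $\pi \in \Ca(I,X)$, and to define $T$ on a morphism $f\colon X \to Y$ as the chart morphism whose forward component is $f$ itself and whose backward family consists of the restrictions $f_\pi = i_\pi \comp f \comp r_{\pi\comp f}\colon X_\pi \to Y_{\pi\comp f}$ introduced above. Since the bundle $\BayesChart(\Ca) \to \Ca$ sends a chart morphism to its forward component, the equation $T$ followed by the projection $= \id_\Ca$ holds by construction, so $T$ is a section as soon as it is shown to be a functor. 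The entire content of the statement is therefore the functoriality of $T$.

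The main device will be a clean characterisation of the restriction: $f_\pi$ is the \emph{unique} morphism $X_\pi \to Y_{\pi\comp f}$ satisfying $f_\pi \comp i_{\pi\comp f} = i_\pi \comp f$. Uniqueness is immediate, since $i_{\pi\comp f}$ is a split monomorphism with retraction $r_{\pi\comp f}$, so post-composition with it is injective. For existence I would compute $f_\pi \comp i_{\pi\comp f} = i_\pi \comp f \comp (r_{\pi\comp f} \comp i_{\pi\comp f})$ and show the idempotent $s_Y := r_{\pi\comp f} \comp i_{\pi\comp f}$ may be deleted. First, $s_Y \asequal{\pi\comp f} \id_Y$: by the section--retraction characterisation of supports applied to $Y_{\pi\comp f}$, this reduces to $i_{\pi\comp f} \comp s_Y = i_{\pi\comp f}$, which follows at once from $i_{\pi\comp f} \comp r_{\pi\comp f} = \id_{Y_{\pi\comp f}}$. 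One then needs $f \comp s_Y \asequal{\pi} f$, and feeding this back through the same characterisation on the $X_\pi$ side converts the almost-sure equality into the required equation $i_\pi \comp f \comp s_Y = i_\pi \comp f$.

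Granting the characterisation, functoriality is purely formal. Identities are handled by the direct computation $(\id_X)_\pi = i_\pi \comp \id_X \comp r_\pi = i_\pi \comp r_\pi = \id_{X_\pi}$, using that $i_\pi$ is a section of $r_\pi$. For composable $f\colon X \to Y$ and $g\colon Y \to Z$, both $(f\comp g)_\pi$ and $f_\pi \comp g_{\pi\comp f}$ are morphisms $X_\pi \to Z_{\pi\comp f\comp g}$, and I would show that post-composing either with $i_{\pi\comp f\comp g}$ yields $i_\pi \comp f \comp g$; for the composite this unfolds as $f_\pi \comp (g_{\pi\comp f} \comp i_{\pi\comp f\comp g}) = f_\pi \comp i_{\pi\comp f} \comp g = i_\pi \comp f \comp g$ by two applications of the characterisation. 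They then agree by the injectivity of post-composition with the split mono $i_{\pi\comp f\comp g}$.

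I expect the one genuinely non-formal point to be the implication $s_Y \asequal{\pi\comp f} \id_Y \Rightarrow f \comp s_Y \asequal{\pi} f$, i.e.\ that almost-sure equality at a pushed-forward state $\pi\comp f$ transports back to almost-sure equality at $\pi$ after pre-composition with $f$. I would prove it by rewriting $\pi \comp \dup_X \comp (\id_X \otimes (f\comp s_Y))$ as $\omega \comp (\id_X \otimes s_Y)$, where $\omega = \pi \comp \dup_X \comp (\id_X \otimes f)$ is the joint state on $X \otimes Y$ whose $Y$-marginal is exactly $\pi\comp f$, thereby reducing the claim to the standard fact that almost-sure equality with respect to a marginal is preserved when composing into any joint carrying that marginal. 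This is the only step where probabilistic content (beyond the purely comonoid-theoretic bookkeeping with the section--retraction data) is used, and it is where I would take the most care; everything else is formal manipulation of the support inclusions and retractions.
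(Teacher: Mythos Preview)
Your proposal is correct and rests on the same substantive ingredient as the paper's proof: the ``base change'' implication $s_Y \asequal{\pi\comp f} \id_Y \Rightarrow f\comp s_Y \asequal{\pi} f$, which the paper invokes as a cited lemma and which you sketch directly. The organisation differs slightly. The paper expands both sides of $f_\pi \comp g_{\pi\comp f} = (f\comp g)_\pi$ and cancels the interior idempotent $r_{\pi\comp f}\comp i_{\pi\comp f}$ by first passing to $\asequal{\pi}$ via the support property of $X_\pi$ and then to $\asequal{\pi\comp f}$ via the base-change lemma. You instead isolate a reusable characterisation---$f_\pi$ is the unique morphism with $f_\pi \comp i_{\pi\comp f} = i_\pi \comp f$---and deduce functoriality from it by two applications and cancellation of the split mono $i_{\pi\comp f\comp g}$. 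The base-change step is consumed once, in establishing the characterisation, rather than inside the functoriality computation. This repackaging is a genuine, if modest, improvement: the commuting-square characterisation is a clean standalone fact that makes later manipulations (e.g.\ the oplax-monoidal coherence checks) more transparent, whereas the paper's direct route is marginally shorter for this proposition alone.
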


\begin{proof}
We choose, for each morphism $f: I \to X$, a fixed support object $X_\pi$ with a section and retraction $i_\pi$ and $r_\pi$.
For an object $X$ we have $T(X) = \diset X {X_{(-)}}$
where $X_{(-)}$ denotes the family sending $\pi \in \Ca(I, X)$ to the support object of $X$ at $\pi$.
For $f: X \to Y$, we have $T(f) = \diset f {f_{(-)}}$ where $f_{(-)}$ is the family of morphisms sending $\pi$ to the morphism $f_\pi: X_\pi \to Y_{\pi\comp f}$ defined by $i_\pi\comp f \comp r_{\pi\comp f}$.
Focusing only on the backward objects this is summarised below:
\[\begin{tikzcd}
	\Ca & {\BayesChart(\Ca)} \\
	X & {X_{(-)}} \\
	Y & {Y_{(-)\comp f}}
	\arrow[""{name=0, anchor=center, inner sep=0}, "{i_{(-)}\comp f \comp r_{(-)\comp f}}", from=2-2, to=3-2]
	\arrow[""{name=1, anchor=center, inner sep=0}, "f"', from=2-1, to=3-1]
	\arrow["T", from=1-1, to=1-2]
	\arrow[shorten <=11pt, shorten >=11pt, maps to, from=1, to=0]
\end{tikzcd}\]
The image of the identity morphism $\id_X$ is, for each $\pi$, $i_\pi\comp \id_X\comp r_\pi$ which is equal to $\id_{X_\pi}$.
So we have $T(\id_X) = \diset {\id_X} {\id_{X_{(-)}}}$.

For functoriality we must show that, for $f: X \to Y$, $g: Y \to Z$, and $\pi: I \to X$,
we have $f_\pi\comp g_{\pi\comp f} = (f\comp g)_{\pi}$.
Specifically this means that
\[i_\pi \comp f\comp r_{\pi\comp f} \comp i_{\pi \comp f} \comp g \comp r_{\pi \comp f \comp g} = i_\pi \comp f \comp g \comp r_{\pi \comp f \comp g}.\]
To see this, note that this equation holds if and only if
$f\comp r_{\pi\comp f} \comp i_{\pi \comp f} \comp g \comp r_{\pi \comp f \comp g} \asequal\pi f \comp g \comp r_{\pi \comp f \comp g}.$
Moreover, from lemma \ref{asequal_base_change}, this is the case exactly when 
$r_{\pi\comp f} \comp i_{\pi \comp f} \comp g \comp r_{\pi \comp f \comp g} \asequal{\pi\comp f} g \comp r_{\pi \comp f \comp g}$
and so we are done.
\end{proof}

This pairing of an indexed category with a section gives what Myers refers to as a dynamical system doctrine.

When working in categorical probability an important feature is the ability to `copy' probability spaces, in the form of a comonoid structure on the category.
In order to extend that to the setting of Bayesian charts we will show that the functor $T$ described above is infact oplax-monoidal and so preserves comonoids.
But to do so we must first establish a monoidal product on Bayesian charts.
We define this monoidal product by showing how the indexed category defining $\BayesChart(\Ca)$ can be lifted to an indexed monoidal category.
Then Bayesian charts can be constructed as a monoidal category by the monoidal Grothendieck construction \cite{monoidal_grothendieck}.

An indexed monoidal category is equivalently a lax monoidal pseudofunctor $\Ca\op \to \Cat$.
Recall that a lax monoidal functor $F$ is one with a monoidal transformation $\mu_{X, Y}: F(X) \otimes F(Y) \to F(X \otimes Y)$ (the \emph{product laxator}), and a morphism $\epsilon: I \to F(I)$ (the \emph{unit laxator}), satisfying certain coherence conditions.
The dualised version of this is an oplax monoidal functor. Precisely, an oplax monoidal structure on $F: \mathcal X \to \mathcal Y$ is equivalently a lax monoidal structure on $F\op: \mathcal X\op \to \mathcal Y\op$.

To define laxators for our indexing functor $\Fam(\Ca(I, -)\op)$ we use the following lemma:

\begin{lemma}
The functor $\Ca(I, -): \Ca \to \Set$ can be made into an oplax monoidal functor with the product oplaxator given by marginalisation:
\[\begin{tikzcd}
	{\Ca(I, X \otimes Y)} & {\Ca(I, X) \times \Ca(I, Y)} \\
	\pi & {(\pi_L, \pi_R)}
	\arrow["{\delta_{X, Y}}", from=1-1, to=1-2]
	\arrow[maps to, from=2-1, to=2-2]
\end{tikzcd}\]
where $\pi_L$ and $\pi_R$ are the marginals of $\pi$ depicted below:
\ctikzfig{./tikz/marginalisation.tikz}
\end{lemma}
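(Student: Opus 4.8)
The plan is to exhibit the two oplaxators and then verify the oplax monoidal coherence conditions, exploiting that the codomain $\Set$ is \emph{cartesian} monoidal. Its unit is a chosen terminal singleton $1$, so for the unit oplaxator I take the unique function $\epsilon : \Ca(I,I) \to 1$; since $I$ is terminal in any Markov category, $\Ca(I,I)$ is already a singleton and $\epsilon$ is in fact a bijection. For the product oplaxator I use the marginalisation $\delta_{X,Y}$ from the statement. Writing $\mathrm{del}_X : X \to I$ for the comonoid counit, marginalisation is postcomposition with the two semicartesian projections $\id_X \otimes \mathrm{del}_Y : X \otimes Y \to X$ and $\mathrm{del}_X \otimes \id_Y : X \otimes Y \to Y$, so that $\delta_{X,Y}(\pi) = (\pi_L, \pi_R) = (\pi \comp (\id_X \otimes \mathrm{del}_Y),\; \pi \comp (\mathrm{del}_X \otimes \id_Y))$.

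The key simplification is that, because the product in $\Set$ is cartesian, the oplaxator $\delta_{X,Y}$ is determined by the universal property of the product as the pairing of its two components $\Ca(I, X \otimes Y) \to \Ca(I, X)$ and $\Ca(I, X \otimes Y) \to \Ca(I, Y)$, which are exactly $\Ca(I,-)$ applied to the two projections above. Consequently every naturality and coherence equation I must check can be postcomposed with the product projections and verified componentwise, where it becomes a statement purely about projection morphisms in $\Ca$.

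First I would check naturality of $\delta$ in each argument: for $f : X \to X'$ this reduces to $\pi \comp (f \otimes \id_Y) \comp (\id_{X'} \otimes \mathrm{del}_Y) = \pi \comp (\id_X \otimes \mathrm{del}_Y) \comp f$, which follows from the interchange law together with the fact that $g \comp \mathrm{del} = \mathrm{del}$ for any $g$, i.e.\ the uniqueness of maps into the terminal object $I$. Then I would verify the associativity and unit coherences. By the componentwise reduction these become equalities of projection morphisms out of a triple tensor $X \otimes Y \otimes Z$ and out of $I \otimes X$ and $X \otimes I$; all of them hold because the comonoid counit is compatible with the tensor, $\mathrm{del}_{X \otimes Y} = \mathrm{del}_X \otimes \mathrm{del}_Y$ (itself a consequence of terminality of $I$), so that iterated deletions reassociate coherently.

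The main obstacle is the associativity coherence: intuitively ``taking all three marginals at once'' is unambiguous, but making this precise means tracking the associators of both $\Ca$ and $\Set$ and reducing the two bracketings to the single comonoid--tensor compatibility law. Once the universal property of the cartesian product is used to work componentwise, this is the only nontrivial input and the remaining steps are bookkeeping. Finally, transposing along $(-)\op$ turns this oplax structure into the lax monoidal structure on $\Ca(I,-)\op$ required for the indexed monoidal category in the sequel.
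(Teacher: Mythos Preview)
Your proposal is correct and follows essentially the same approach as the paper: both take the unit oplaxator to be trivial by terminality of the monoidal unit in $\Set$, take $\delta$ to be marginalisation, and then verify naturality and the associativity/unitor coherences. The paper in fact mentions both your route (``diagram chases using naturality conditions of the structure maps'') and a shortcut via Joyal--Street coherence for symmetric monoidal categories, observing that each required equation asserts equality of two isotopic string diagrams; your componentwise reduction using the cartesian structure of $\Set$ is exactly the diagram-chase version, made explicit.
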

\begin{proof}
Since the monoidal unit in $\Set$	 is terminal, the unit oplaxator is trivial.

The naturality and coherence conditions can all be proved by diagram chases using naturality conditions of the structure maps, but can more easily be seen by noting that in each equation both sides trivially denote isotopic string diagrams, and so are equal by Joyal and Street's coherence theorem for symmetric monoidal categories \cite{coherence_theorems}.
As such we will not give proofs in any more detail, but for reference we do state explicitly the equations required.

Naturality for $\delta$ requires that for each $f: X \to X'$ and $g: Y \to Y'$, the following square commutes:
\[\begin{tikzcd}
	{\Ca(I, X \otimes Y)} & {\Ca(I, X) \times \Ca(I, Y)} \\
	{\Ca(I, X'\otimes Y')} & {\Ca(I, X') \times \Ca(I, Y')}
	\arrow["{\delta_{X, Y}}", from=1-1, to=1-2]
	\arrow["{\Ca(I, f \otimes g)}"', from=1-1, to=2-1]
	\arrow["{\delta_{X', Y'}}"', from=2-1, to=2-2]
	\arrow["{\Ca(I, f) \times \Ca(I, g)}", from=1-2, to=2-2]
\end{tikzcd}\]
i.e. that $(\pi\comp (f \otimes g))_L = \pi_L\comp f$ and $(\pi\comp (f \otimes g))_R = \pi_R\comp g$ for any $\pi: I \to X \otimes Y$.

The associativity condition requires commutativity of the following diagram
\[\begin{tikzcd}
	{(\Ca(I, X) \times \Ca(I, Y)) \times \Ca(I, Z)} & {\Ca(I, X) \times (\Ca(I, Y) \times \Ca(I, Z))} \\
	{\Ca(I, X \otimes Y) \otimes Z)} & {\Ca(I, X) \times \Ca(I, Y \otimes Z)} \\
	{\Ca(I, (X \otimes Y) \otimes Z)} & {\Ca(I, X \otimes (Y \otimes Z))}
	\arrow["\sim"', from=1-2, to=1-1]
	\arrow["{\Ca(I, X) \times \delta_{Y, Z}}"', from=2-2, to=1-2]
	\arrow["{\delta_{X, Y \otimes Z}}"', from=3-2, to=2-2]
	\arrow["{\Ca(I, \alpha_{X, Y, Z})}", from=3-2, to=3-1]
	\arrow["{\delta_{X \otimes Y, Z}}", from=3-1, to=2-1]
	\arrow["{\delta_{X, Y} \times \Ca(I, Z)}", from=2-1, to=1-1]
\end{tikzcd}\]
This means that 
\begin{itemize}
	\item $\pi_L = (\pi\comp\alpha_{X, Y, Z})_{LL}$,
	\item $\pi_{RL} = (\pi\comp\alpha_{X, Y, Z})_{LR}$,
	\item and $\pi_{RR} = (\pi\comp\alpha_{X, Y, Z})_R$,
\end{itemize}
for any $\pi: I \to X$.

The coherence for the left unitor requires that
\[\begin{tikzcd}
	{\{*\} \times \Ca(I, X)} & {\Ca(I, I) \otimes \Ca(I, X)} \\
	{\Ca(I, X)} & {\Ca(I, I \otimes X)}
	\arrow["{\Ca(I, \lambda_X)}"', from=2-1, to=2-2]
	\arrow["{\delta_{I, X}}"', from=2-2, to=1-2]
	\arrow["{\eta \otimes \Ca(I, X)}"', from=1-2, to=1-1]
	\arrow["\sim", from=2-1, to=1-1]
\end{tikzcd}\]
commutes.
Or equivalently that $(\pi\comp\lambda_X)_R = \pi$.
Similarly, the coherence for the right unitor requires that $(\pi\comp\rho_X)_L = \pi$.
\end{proof}

As such we have that $\Ca(I, -)\op$ can be made into a lax monoidal functor. Moreover, \cite{monoidal_grothendieck} shows that when $\Ca$ is monoidal, $\Fam_\Ca$ has a canonical lax monoidal structure given by pointwise tensoring, and so we have a lax monoidal functor $\Fam_\Ca(\Ca(I, -)\op)$.

Then by the monoidal Grothendieck construction we have the following corollary:

\begin{corollary}
$\BayesChart(\Ca)$ may be equipped with the structure of a monoidal category, with a tensor product given by
\[\diset{X}{A} \otimes \diset{Y}{B} = \diset{X \otimes Y}{A \otimes B}\]
where $A \otimes B: \Ca(I, X \otimes Y) \to \Ca$ sends $\pi: I \to X \otimes Y$ to $A(\pi_L)\otimes B(\pi_R)$.
\qed
\end{corollary}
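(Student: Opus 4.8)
The plan is to obtain the monoidal structure directly from the monoidal Grothendieck construction of \cite{monoidal_grothendieck} applied to the lax monoidal pseudofunctor assembled above, and then to verify that the product laxator this construction uses on objects reduces to the stated formula. The coherence data — associator, unitors, and all the pentagon and triangle equations — are delivered wholesale by the cited theorem, so essentially all of the work lies in computing the laxator explicitly.

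First I would name the relevant functor. Writing $F = \Fam_\Ca(\Ca(I, -)\op): \Ca\op \to \Cat$, the preceding lemma gives an oplax monoidal structure on $\Ca(I, -)$ with oplaxator the marginalisation map $\delta_{X, Y}$, hence a lax monoidal structure on $\Ca(I, -)\op$; composing with the pointwise-tensoring lax structure on $\Fam_\Ca$ makes $F$ lax monoidal, with product laxator $\mu^F_{X, Y}: F(X) \times F(Y) \to F(X \otimes Y)$. The monoidal Grothendieck construction then puts a tensor product on $\int F = \BayesChart(\Ca)$ whose value on objects is $\diset X A \otimes \diset Y B = \diset{X \otimes Y}{\mu^F_{X, Y}(A, B)}$, so it remains only to identify $\mu^F_{X, Y}(A, B)$.

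The central step is to unwind this composite laxator. Because the laxator of a composite of lax monoidal functors is the composite of the laxators, $\mu^F_{X, Y}$ factors as the pointwise-tensoring laxator of $\Fam_\Ca$ followed by $\Fam_\Ca$ applied to the laxator $\delta_{X, Y}$ of $\Ca(I, -)\op$. Concretely, given families $A: \Ca(I, X) \to \Ca$ and $B: \Ca(I, Y) \to \Ca$, pointwise tensoring produces the family over $\Ca(I, X) \times \Ca(I, Y)$ sending $(\sigma, \tau)$ to $A(\sigma) \otimes B(\tau)$; since $\Fam_\Ca$ acts on a function by reindexing (precomposition), applying $\Fam_\Ca(\delta_{X, Y})$ pulls this family back along marginalisation to the family $\pi \mapsto A(\pi_L) \otimes B(\pi_R)$ over $\Ca(I, X \otimes Y)$. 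This is exactly the claimed description of $A \otimes B$, which finishes the identification of the tensor product on objects.

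The one point demanding care — and hence the main obstacle — is tracking variances. The marginalisation map $\delta_{X, Y}$ runs from $\Ca(I, X \otimes Y)$ to $\Ca(I, X) \times \Ca(I, Y)$ in $\Set$, so it is only after passing to $\Set\op$ that it becomes a laxator of $\Ca(I, -)\op$, and it is the contravariant action of $\Fam_\Ca$ that then reinterprets it as reindexing in the direction needed to land in $\Ca^{\Ca(I, X \otimes Y)}$. Once these opposites are kept straight the computation above is forced, and no further verification is required beyond invoking the cited construction.
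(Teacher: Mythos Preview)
Your proposal is correct and follows exactly the route the paper takes: the paper simply states that the corollary follows ``by the monoidal Grothendieck construction'' applied to the lax monoidal functor $\Fam_\Ca(\Ca(I,-)\op)$ assembled just before, and marks it with a \qed. Your write-up merely spells out the unwinding of the composite laxator that the paper leaves implicit, so there is nothing to compare or correct.
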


With this product defined, we can state the final main result of this section:

\begin{proposition}
$T$ can be equipped with the structure of an oplax-monoidal functor.
\end{proposition}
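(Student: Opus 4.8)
The plan is to write down the two oplaxators explicitly and then check naturality and coherence, systematically reducing every backward equation to an almost-sure equality through the universal property of the support objects (the Proposition characterising $X_\pi$ by $f \asequal\pi g \iff i_\pi \comp f = i_\pi \comp g$). For the unit oplaxator, first observe that the support of the unique state $\id_I : I \to I$ may be \emph{chosen} to be $I$ itself with $i_{\id_I} = r_{\id_I} = \id_I$, since $\asequal{\id_I}$ is just equality and hence $\Ca(I,-)/\asequal{\id_I} \cong \Ca(I,-)$ is represented by $I$. With this choice $T(I) = \diset{I}{I}$ agrees on the nose with the monoidal unit of $\BayesChart(\Ca)$ given by the corollary, so the unit oplaxator can be taken to be the identity. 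For the product I would take $\phi_{X,Y} : T(X \otimes Y) \chartto T(X) \otimes T(Y)$ with forward component $\id_{X \otimes Y}$ and backward component at each $\pi : I \to X \otimes Y$ the comparison map $c_\pi := i_\pi \comp (r_{\pi_L} \otimes r_{\pi_R}) : (X \otimes Y)_\pi \to X_{\pi_L} \otimes Y_{\pi_R}$. Since a chart carries no compatibility equation between its forward and backward data, $\phi_{X,Y}$ is automatically a well-defined morphism.

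Next I would verify naturality. Fix $f : X \to X'$ and $g : Y \to Y'$, and write $\sigma = \pi \comp (f \otimes g)$, so that $\sigma_L = \pi_L \comp f$ and $\sigma_R = \pi_R \comp g$ by naturality of marginalisation. The forward components of both legs of the naturality square are $f \otimes g$, so only the backward components at each $\pi$ remain to be compared. Both are morphisms out of $(X \otimes Y)_\pi$ that begin with $i_\pi$; writing them as $i_\pi \comp P_1$ and $i_\pi \comp P_2$ with
\[ P_1 = (r_{\pi_L} \comp i_{\pi_L} \comp f \comp r_{\sigma_L}) \otimes (r_{\pi_R} \comp i_{\pi_R} \comp g \comp r_{\sigma_R}), \qquad P_2 = (f \otimes g) \comp (r_\sigma \comp i_\sigma) \comp (r_{\sigma_L} \otimes r_{\sigma_R}), \]
the support property of $(X \otimes Y)_\pi$ turns the desired equation $i_\pi \comp P_1 = i_\pi \comp P_2$ into the almost-sure equality $P_1 \asequal\pi P_2$.

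To establish $P_1 \asequal\pi P_2$, I would first use $r_\sigma \comp i_\sigma \asequal\sigma \id$ together with \ref{asequal_base_change} applied to $f \otimes g$ (recalling $\sigma = \pi \comp (f \otimes g)$) and closure of $\asequal\pi$ under postcomposition to obtain $P_2 \asequal\pi (f \comp r_{\sigma_L}) \otimes (g \comp r_{\sigma_R})$. On each tensor factor, $r_{\pi_L} \comp i_{\pi_L} \asequal{\pi_L} \id$ gives $r_{\pi_L} \comp i_{\pi_L} \comp f \comp r_{\sigma_L} \asequal{\pi_L} f \comp r_{\sigma_L}$, and symmetrically at $\pi_R$. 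It then remains to transfer these two factor-wise equalities at the \emph{marginals} $\pi_L, \pi_R$ into a single equality at the \emph{joint} $\pi$, and this is exactly where I expect the main obstacle to lie, since $\pi$ need not be the product $\pi_L \otimes \pi_R$. I would isolate this as a lemma: if $u \asequal{\pi_L} u'$ and $v \asequal{\pi_R} v'$ then $u \otimes v \asequal\pi u' \otimes v'$. Its proof writes each marginal as a discard-composite, e.g. $\pi_L = \pi \comp (\id_X \otimes \mathrm{del}_Y)$, applies \ref{asequal_base_change} to convert $\asequal{\pi_L}$ into $\asequal\pi$, and then uses that $\asequal\pi$ is a congruence for pairing against a fixed leg (a consequence of the comonoid axioms) in order to reattach the discarded coordinate. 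Combining this with the reduction of $P_2$ yields $P_1 \asequal\pi P_2$ and hence naturality.

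Finally I would dispatch the associativity and unit coherence conditions for $(\phi,\epsilon)$. As with naturality they split into a forward part and a backward part at each prior. The forward parts are built from the associators and unitors of $\Ca$ and hold by Mac Lane coherence, precisely as the marginalisation coherences were handled earlier in the paper. The backward parts reduce to equalities of the comparison maps $c_\pi$, which follow from the already-established coherences of the marginal oplaxator $\delta$ — the associativity identities relating $\pi_{LL}, \pi_{RL}, \pi_{RR}$ to $(\pi \comp \alpha_{X,Y,Z})$, and the unitor identities $(\pi \comp \lambda_X)_R = \pi$ and $(\pi \comp \rho_X)_L = \pi$ — combined with the support properties $i \comp r = \id$ and $r \comp i \asequal{} \id$; the unit coherences additionally use the chosen identification $I_{\id_I} = I$, which collapses the relevant comparison maps to identities and unitors. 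The only genuinely nontrivial ingredient throughout is the joint-versus-marginal transfer isolated in the lemma above.
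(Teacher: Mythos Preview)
Your proposal is correct and follows essentially the same approach as the paper: the oplaxators are identical (identity unit, $c_\pi = i_\pi \comp (r_{\pi_L} \otimes r_{\pi_R})$ for the product), and naturality and coherence are reduced to almost-sure equalities via the support property, which is exactly what the paper's ``straightforward string-diagram calculation'' amounts to. You have in fact made explicit the one nontrivial ingredient the paper leaves implicit---the transfer lemma that $u \asequal{\pi_L} u'$ and $v \asequal{\pi_R} v'$ imply $u \otimes v \asequal\pi u' \otimes v'$---and your sketch of its proof (base-change along the marginal projections, then reassembling via the comonoid counit law) is correct.
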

\begin{proof}
Noting that there is only a single prior on $I$ given by the unique map $!: I \to I$, and that any support object $I_!$ must also be terminal, since there is a unique map $X \xrightarrow{!} I \xrightarrow{r_!} I_!$, we have that $T(I)$ is strictly a monoidal unit. So we can take the unit oplaxator to be the identity morphism $\id_{T(I)}$.

Now consider a pair of objects $X$ and $Y$.
We have $T(X) \otimes T(Y) = \diset {X \otimes Y} {X_{(-)} \otimes Y_{(-)}}$
where $X_{(-)} \otimes Y_{(-)}$ is a family of $\Ca(I, X \otimes Y)$-indexed objects sending $\pi$ to $X_{\pi_L}\otimes Y_{\pi_R}$.
We note that this is in general distinct from the support objects $(X \otimes Y)_{\pi}$ picked out by $T(X \otimes Y) = \diset{X \otimes Y} {(X \otimes Y)_{(-)}}$, but there is a morphism $\gamma_{X, Y}: (X \otimes Y)_\pi \to X_{\pi_L} \otimes Y_{\pi_R}$ defined by $i_\pi\comp(r_{\pi_L}\otimes r_{\pi_R})$ which we shall take to be the product-oplaxator of $T$.

We are required to show that this is a natural transformation between the functors
$(T\times T) \comp \otimes$ and $\otimes\comp T$.
This amounts to the following equality of string diagrams:
\ctikzfig{./tikz/oplaxator_natural.tikz}
which can be shown equal via straightforward calculation.

Showing coherence for the associators amounts to proving that the following diagram commutes in $\Ca$:
\[\begin{tikzcd}
	{(X_{\pi_{LL}} \otimes Y_{\pi_{LR}}) \otimes Z_{\pi_R}} && {X_{\pi_{LL}} \otimes (Y_{\pi_{LR}} \otimes Z_{\pi_R})} \\
	{(X \otimes Y)_{\pi_L} \otimes Z_{\pi_R}} && {X_{\pi_{LL}} \otimes (Y \otimes Z)_{(\pi\comp \alpha)_R}} \\
	{((X \otimes Y) \otimes Z)_\pi} && {(X \otimes (Y \otimes Z))_{\pi\comp \alpha}}
	\arrow["{\alpha_{X_{\pi_{LL}}, Y_{\pi_{LR}}, Z_{\pi_R}}}", from=1-1, to=1-3]
	\arrow["{X_{\pi_{LL}} \otimes (i_{(\pi\comp \alpha)_R}\comp(r_{\pi_{LR}}\otimes r_{\pi_R}))}"', from=2-3, to=1-3]
	\arrow["{i_{\pi\comp \alpha} \comp (r_{\pi_{LL}}\otimes r_{(\pi\comp\alpha)_R})}"', from=3-3, to=2-3]
	\arrow["{i_\pi\comp \alpha_{X, Y, Z} \comp r_{\pi\comp \alpha}}"', from=3-1, to=3-3]
	\arrow["{i_\pi\comp(r_{\pi_L} \otimes r_{\pi_R})}", from=3-1, to=2-1]
	\arrow["{(i_{\pi_L} \comp (r_{\pi_{LL}} \otimes r_{\pi_{LR}})) \otimes Z_{\pi_R}}", from=2-1, to=1-1]
\end{tikzcd}\]
which can be seen to commute via a similar string diagram calculation.

Finally the unitor coherence conditions amount to commutativity of the following diagrams for all $\pi: I \to I \otimes X$ and $\sigma: I \to X \otimes I$ and are true due to the naturality of the unitors:

\[\begin{tikzcd}
	& {I \otimes X_{\pi_R}} &&&& {X_{\sigma_L} \otimes I} \\
	{X_{\pi_R}} && {(I \otimes X)_{\pi}} && {X_{\sigma_L}} && {(X \otimes I)_\pi}
	\arrow["{\gamma_{I, X} = i_\pi\comp (I \otimes r_{\pi_R})}"', from=2-3, to=1-2]
	\arrow["{\lambda_{X_{\pi_R}}}"', from=1-2, to=2-1]
	\arrow["{i_\pi\comp \lambda_X \comp r_{\pi_R}}", from=2-3, to=2-1]
	\arrow["{\gamma_{X, I} = i_{\pi}\comp (r_{\sigma_L} \otimes I)}"', from=2-7, to=1-6]
	\arrow["{\rho_{X_{\sigma_L}}}"', from=1-6, to=2-5]
	\arrow["{i_\sigma \comp \rho_X \comp r_{\sigma_L}}", from=2-7, to=2-5]
\end{tikzcd}\]

\end{proof}

\begin{corollary}
Every object in the image of $T$ is a comonoid.
\qed
\end{corollary}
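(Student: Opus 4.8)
The plan is to deduce this as an immediate instance of the general fact that any oplax monoidal functor carries comonoids to comonoids, applied to the functor $T$, which the preceding proposition exhibits as oplax monoidal. The corollary therefore requires essentially no new computation, only the assembly of ingredients already in hand.

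First I would recall that in a Markov category every object is, by the defining supply of commutative comonoids, canonically a comonoid: each $X$ carries a copy map $\dup_X: X \to X \otimes X$ and a discarding map $\epsilon_X: X \to I$ satisfying coassociativity and counitality (and indeed cocommutativity, though the corollary asserts only the underlying comonoid structure). In particular every object of $\Ca$ is a comonoid. Next I would transport this structure along $T$ using the oplaxators. Writing $\gamma_{X,X}: T(X \otimes X) \to T(X) \otimes T(X)$ for the product-oplaxator and recalling that the unit oplaxator is $\id_{T(I)}$ with $T(I) = I$, the comultiplication on $T(X)$ is the composite $T(\dup_X) \comp \gamma_{X,X}: T(X) \to T(X) \otimes T(X)$, and the counit is $T(\epsilon_X): T(X) \to T(I) = I$. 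The comonoid axioms for this structure follow formally from functoriality of $T$, the corresponding axioms in $\Ca$, and the associativity and unitality coherence of the oplaxators established in the proof of the previous proposition.

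I do not expect any genuine obstacle here: all of the work, in particular verifying that $\gamma$ is natural and satisfies the associator and unitor coherences, was already discharged in showing that $T$ is oplax monoidal, and the passage from an oplax monoidal functor to preservation of comonoids is purely formal. The only points requiring a moment's care are the bookkeeping of composition order (diagrammatic here) and the identification $T(I) = I$, both of which are routine.
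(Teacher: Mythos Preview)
Your proposal is correct and is exactly the reasoning the paper intends: the corollary is marked with a bare \qed precisely because it is the standard fact that oplax monoidal functors send comonoids to comonoids, applied to $T$ and the canonical comonoid structure carried by every object of a Markov category. No further argument is needed.
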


\section{Dependent Bayesian Lenses}

As alluded to in the previous section, we will now use charts as a stepping stone to dependent Bayesian lenses.
Indeed as in the case of ordinary lenses over a cartesian category, charts are just the fibrewise opposite of lenses, so we can define
\[\BayesLens(\Ca) = \int_{X \in \Ca} \Fam_\Ca(\Ca(I, X)\op)\op\]
and we are further justified in calling these lenses, because they are exactly a category of generalised lenses in the sense of \cite{spivak_lenses}.

\begin{remark}
In order to avoid confusion due to the name collision, we briefly distinguish between the existing definition of Bayesian lenses in \cite{statistical_games} and dependent Bayesian lenses as defined here, as these categories are related, but not the same.
Smithe's lenses have as objects, pairs of objects in $\Ca$, whereas dependent Bayesian lenses here have pairs $(X, A)$ where $X$ is also an object in $\Ca$, but $A$ is a $\Ca(I, X)$-indexed family of objects from $\Ca$.
Then, a morphism $\diset X S \lensto \diset Y R$ of ordinary Bayesian lenses consists of a morphism $f: X \to Y$ and a function $f^\sharp: \Ca(I, X) \to \Ca(R, S)$.
A morphism $\diset X A \lensto \diset Y B$ of dependent Bayesian lenses consists also of a morphism $f: X \to Y$ but then has as its backwards component a dependent function $f^\sharp: (\pi : \Ca(I, X)) \to \Ca(B(\pi\comp f), A(\pi))$.
So we see explicitly that dependent Bayesian lenses differ exactly by the addition of dependence to the function consisting the backwards component.

This relationship is further expounded when we consider the indexed categories used in defining either category of lenses.
The indexed category $\Stat$ in \cite{statistical_games} is very similar to $\Fam_\Ca(\Ca(I, -)\op)$.
Specifically, for each $X$, $\Stat(X)$ is the subcategory of $\Fam_\Ca(\Ca(I, X)\op)$ obtained by taking only constant families of objects.
\end{remark}

As before, where we saw that the category of charts has an oplax monoidal section $T$ restricting morphisms to families of support objects, we will show here that Bayesian lenses have a similar section sending morphisms to their inverses between supports.

\begin{proposition}
Assume that $\Ca$ has all support objects and Bayesian inverses.
The fibred category of dependent Bayesian lenses has a section $S: \Ca \to \BayesLens(\Ca)$.
\end{proposition}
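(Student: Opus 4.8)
The plan is to define $S$ by mirroring the chart section $T$, but replacing the forward restrictions in the backward component with Bayesian inverses with support. On objects I would set $S(X) = \diset{X}{X_{(-)}}$, exactly as for $T$, using the fixed choice of support object $X_\pi$ together with its section $i_\pi$ and retraction $r_\pi$ for each $\pi \in \Ca(I,X)$. On a morphism $f: X \to Y$ I would set $S(f) = \diset{f}{f^\sharp_{(-)}}$, where for each $\pi$ the component $f^\sharp_\pi: Y_{\pi\comp f} \to X_\pi$ is the Bayesian inverse with support of $f$ at $\pi$. These exist because $\Ca$ has Bayesian inverses and, via the bijection of Proposition \ref{bayesian_inverse_bijection}, every ordinary inverse yields one; they are moreover unique by the subsequent corollary. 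Since the forward datum of $S(f)$ is just $f$ and the underlying object of $S(X)$ is $X$, the composite of $S$ with the projection $\BayesLens(\Ca) \to \Ca$ is the identity, so $S$ is a section as soon as it is shown to be a functor.

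For the identity law I would observe that $\id_X$ is an ordinary Bayesian inverse of $\id_X$ at any $\pi$, so applying the map $\tilde\Psi$ from the proof of Proposition \ref{bayesian_inverse_bijection} gives $\tilde\Psi(\id_X) = i_\pi \comp \id_X \comp r_\pi = i_\pi \comp r_\pi = \id_{X_\pi}$, using the section-retraction identity. Hence $\id_{X_\pi}$ is an inverse with support of $\id_X$ at $\pi$, and by uniqueness $(\id_X)^\sharp_\pi = \id_{X_\pi}$, so $S(\id_X)$ is the identity lens on $\diset{X}{X_{(-)}}$.

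The substance is functoriality on composites. Recall that in $\BayesLens(\Ca)$ the backward component of $S(f)\comp S(g)$ at $\pi$ is $g^\sharp_{\pi\comp f}\comp f^\sharp_\pi: Z_{\pi\comp f\comp g} \to X_\pi$. By uniqueness of inverses with support it suffices to show this composite is itself an inverse with support of $f\comp g$ at $\pi$, for then it must equal $(f\comp g)^\sharp_\pi$. I would verify this by transporting along the maps $\Psi, \tilde\Psi$ of Proposition \ref{bayesian_inverse_bijection}. Writing $\Psi(h) = r_{\pi\comp f\comp g}\comp h\comp i_\pi$ for the candidate $h = g^\sharp_{\pi\comp f}\comp f^\sharp_\pi$ and inserting $\id_{Y_{\pi\comp f}} = i_{\pi\comp f}\comp r_{\pi\comp f}$, one computes $\Psi(h) = \Psi(g^\sharp_{\pi\comp f})\comp\Psi(f^\sharp_\pi)$, a composite of an ordinary inverse of $g$ at $\pi\comp f$ with an ordinary inverse of $f$ at $\pi$. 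By the chain rule for Bayesian inversion this composite is an ordinary inverse of $f\comp g$ at $\pi$. Applying $\tilde\Psi$ and cancelling two further section-retraction pairs gives $\tilde\Psi(\Psi(h)) = h$ on the nose, so $h$ lies in the image of $\tilde\Psi$ and is therefore an inverse with support, as required.

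I expect the composition law to be the main obstacle, for two reasons. First, it relies on the chain rule that a composite of ordinary Bayesian inverses inverts the composite map; this is standard but is not stated in the excerpt, so I would either cite it or discharge it by a short string-diagram argument of the kind used for the functoriality of $T$. Second, one must keep careful track of the difference between equality on the nose---which uniqueness of inverses with support grants---and the up-to-almost-sure-equality nature of the correspondence $\Psi, \tilde\Psi$; the cancellations $i_\pi\comp r_\pi = \id_{X_\pi}$ are precisely what let the almost-sure bookkeeping collapse to strict equalities. As an alternative to the transport argument, I could instead verify the defining equation for $g^\sharp_{\pi\comp f}\comp f^\sharp_\pi$ directly by string diagram and invoke uniqueness, which sidesteps the chain rule at the cost of a longer diagrammatic calculation.
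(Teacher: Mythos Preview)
Your proposal is correct and follows essentially the same route as the paper: define $S$ on objects and morphisms exactly as you do, then reduce functoriality of the backward component to the chain rule for ordinary Bayesian inverses by passing through the bijection of Proposition~\ref{bayesian_inverse_bijection}. The paper phrases this reduction as ``$g^\sharp_{\pi\comp f}\comp f^\sharp_\pi$ is an inverse-with-support exactly when $r_{\pi\comp f\comp g}\comp g^\sharp_{\pi\comp f}\comp f^\sharp_\pi\comp i_\pi$ is an ordinary Bayesian inverse'' and then factors the latter as the composite of $\Psi(g^\sharp_{\pi\comp f})$ and $\Psi(f^\sharp_\pi)$; your use of the identity $i_{\pi\comp f}\comp r_{\pi\comp f} = \id_{Y_{\pi\comp f}}$ to split $\Psi(h)$ and your observation that $\tilde\Psi(\Psi(h)) = h$ on the nose are just a more explicit rendering of the same two steps.
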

\begin{proof}
On objects, $S$ has the same action as $T$, sending $X$ to the pair $\diset X {X_{(-)}}$ where $X_{(-)}: \Ca(I, X) \to \Ca$ picks out support objects of $X$ at each $\pi: I \to X$. 
On morphisms $f: X \to Y$ we have $S(f) = \diset f {f^\sharp}$ where $f^\sharp$ is the family of Bayesian inverses-with-support of $f$ at each prior.
It is easy to see that $S(\id_X) = \diset {\id_X} {\id_{X_{(-)}}}$,
so it remains to check that Bayesian inversion is functorial.
This follows for similar reasons to the functoriality of $T$.
If $f$ and $g$ are composable morphisms then $g^\sharp_{\pi\comp f} \comp f^\sharp_{\pi}$ is an inverse-with-support exactly when
$r_{\pi\comp f \comp g} \comp g^\sharp_{\pi\comp f} \comp f^\sharp_\pi \comp i_\pi$ is an ordinary Bayesian inverse.
But this is the composition of $r_{\pi\comp f \comp g} \comp g^\sharp_{\pi\comp f}\comp i_{\pi\comp f}$ and $r_{\pi\comp f}f^\sharp_\pi \comp i_\pi$ which must themselves be ordinary inverses.
And it is easy to check that composites of ordinary inverses must also be inverses.
\end{proof}

$\BayesLens(\Ca)$ has a monoidal product defined in the same way as to Bayesian charts.
The oplax monoidal structure given for $T$ also has a similar analogue for $S$, but the change in direction in the fibres means that it is instead lax monoidal, meaning that $S$ preserves monoid objects where $T$ preserved comonoids.

While the loss of functorial copying is unfortunate, it is easy to see why this should be the case.
A comultiplication morphism for such comonoids in Bayesian lenses should have backwards maps of the form
\[X_\pi \otimes X_\pi \xrightarrow{\delta_{X, X}} (X \otimes X)_{\pi\comp \Delta_X} \xrightarrow{\Delta_X^\sharp} X_\pi\]
but in most cases $X_\pi \otimes X_\pi$ should be thought of as being a lot bigger than $X_\pi$ and there is not a canonical way to multiply two arbitrary distributions on the same object.
On the other hand the Bayesian inverse $\Delta_X^\sharp: (X \otimes X)_{\pi\comp \Delta_X} \to X_\pi$ is canonical, because the distribution $\pi\comp\Delta_X$ is supported on a small subset of the points in $X \otimes X$.
Namely we should think of $(X \otimes X)_{\pi\comp \Delta_X}$ as being (a subspace of) the diagonal on $X$.
We make this precise in the following:

\begin{proposition}
Let $\pi: I \to X$ then $(X \otimes X)_{\pi\comp \dup_X} \cong X_\pi$, with an isomorphism given by the Bayesian inverse (with support) of copy.
\end{proposition}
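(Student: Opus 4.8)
The plan is to show that $X_\pi$, suitably equipped with structure maps into $X \otimes X$, already serves as a support object for the pushforward $\pi \comp \dup_X$, and then to identify the resulting comparison isomorphism with the Bayesian inverse with support of copy. Throughout write $!\colon X \to I$ for the counit (discarding) and $\mathrm{pr} := \id_X \otimes \mathord{!}\colon X \otimes X \to X$ for the first projection. I prefer this route over directly checking that the two relevant composites are identities, because it isolates the single genuinely new computation and defers everything else to results already established.

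First I would equip $X_\pi$ with the candidate section $i' := i_\pi \comp \dup_X \colon X_\pi \to X \otimes X$ and retraction $r' := \mathrm{pr} \comp r_\pi \colon X \otimes X \to X_\pi$. That these form a section--retraction pair is immediate from the comonoid counit law $\dup_X \comp \mathrm{pr} = \id_X$, since then $i' \comp r' = i_\pi \comp (\dup_X \comp \mathrm{pr}) \comp r_\pi = i_\pi \comp r_\pi = \id_{X_\pi}$. The substantive step is verifying the detection property demanded by the proposition characterising supports: for all $f, g \colon X \otimes X \to Z$, that $f \asequal{\pi\comp\dup_X} g \iff i' \comp f = i' \comp g$. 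Unfolding $i' = i_\pi \comp \dup_X$, the defining property of $i_\pi$ (applied to the maps $\dup_X \comp f, \dup_X \comp g \colon X \to Z$) gives $i' \comp f = i' \comp g \iff \dup_X \comp f \asequal{\pi} \dup_X \comp g$, and Lemma~\ref{asequal_base_change}, applied to $\dup_X$, rewrites this last condition as $f \asequal{\pi\comp\dup_X} g$. This is exactly the statement that the section $i_\pi \comp \dup_X$ sees only the diagonal, and it is where the base-change behaviour of almost-sure equality is essential; I expect this to be the crux of the argument.

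With the detection property in hand, the cited proposition tells us that $X_\pi$ together with $(i', r')$ is a support of $\pi \comp \dup_X$. Since supports are unique up to isomorphism, the comparison map $i_{\pi\comp\dup_X} \comp r' = i_{\pi\comp\dup_X} \comp \mathrm{pr} \comp r_\pi \colon (X \otimes X)_{\pi\comp\dup_X} \to X_\pi$ between the chosen support and this one is an isomorphism, establishing $(X \otimes X)_{\pi\comp\dup_X} \cong X_\pi$.

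It then remains to identify this comparison isomorphism with $(\dup_X)^\sharp_\pi$. For this it suffices to check that $\mathrm{pr}$ is an ordinary Bayesian inverse of $\dup_X$ at $\pi$: because copy is deterministic, the defining Bayes-law equation holds strictly rather than only $\asequal{}$-almost-surely, and reduces to the comonoid counit and coassociativity axioms, so this is a short string-diagram calculation valid for every $\pi$. Proposition~\ref{bayesian_inverse_bijection} then sends this ordinary inverse to the inverse-with-support $(\dup_X)^\sharp_\pi = i_{\pi\comp\dup_X} \comp \mathrm{pr} \comp r_\pi$, which is precisely the comparison isomorphism above. Hence the isomorphism $(X \otimes X)_{\pi\comp\dup_X} \cong X_\pi$ is realised by the Bayesian inverse with support of copy, as claimed, with the detection step being the only part that goes beyond routine bookkeeping with the counit law and the support bijection.
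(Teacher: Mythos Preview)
Your proof is correct and takes a genuinely different route from the paper's. The paper proceeds by explicitly exhibiting a candidate two-sided inverse, namely the Bayesian inverse-with-support $L^\sharp_{\pi\comp\dup_X}$ of the left marginal $L = \mathrm{pr}$, and then verifies by pasting two commutative squares together with a string-diagram argument (establishing $L\comp \dup_X \asequal{\pi\comp\dup_X} \id_{X\otimes X}$) that $(\dup_X)^\sharp_\pi$ and $L^\sharp_{\pi\comp\dup_X}$ are mutually inverse. Your argument instead shows directly that $X_\pi$, equipped with $(i_\pi\comp\dup_X,\ \mathrm{pr}\comp r_\pi)$, already satisfies the section--retraction characterisation of a support for $\pi\comp\dup_X$, the single nontrivial step being the detection property obtained from the base-change lemma for almost-sure equality; the isomorphism then falls out of uniqueness of supports, and its identification with $(\dup_X)^\sharp_\pi$ reduces to observing that $\mathrm{pr}$ is an ordinary Bayesian inverse of $\dup_X$ and invoking Proposition~\ref{bayesian_inverse_bijection}. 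Your route is more conceptual and reuses the support characterisation and the inverse-bijection machinery already established in the paper, whereas the paper's route is more hands-on, giving the inverse morphism explicitly and checking both identity composites directly.
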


\begin{proof}
In the following we write $C$ for $\dup_X: X \to X \otimes X$.
We will show that $C$ has an inverse, given by the Bayesian inversion of a (without loss of generality) left marginalisation morphism $L: X \otimes X \to X$.

Note that from the comonad laws we have $C\comp L = \id_X$, and consider the Bayesian inverse-with-support of $L$ at $\pi\comp C$.
This has a domain of $X_{\pi\comp C \comp L}$ which by the previous observation is equal to $X_\pi$.
So we have $L^\sharp_{\pi\comp C}: X_\pi \to (X\otimes X)_{\pi\comp C}$.
The definition of an inverse-with-support gives us that
\begin{center}
\includegraphics[scale=0.08]{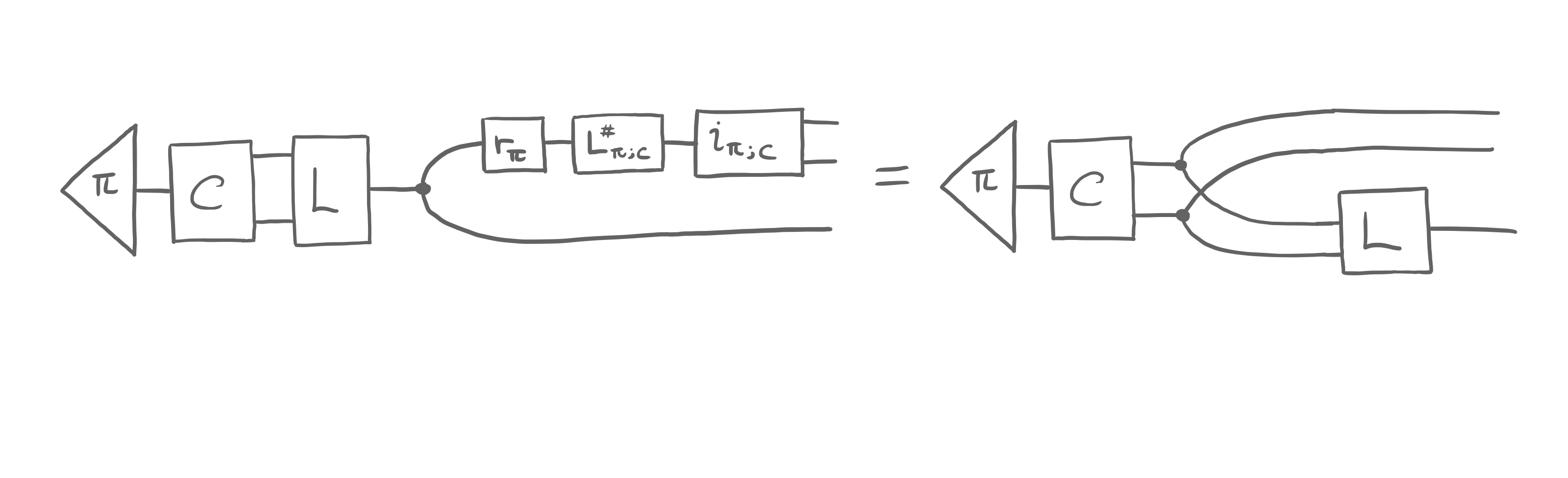}
\end{center}
Recalling that $C$ is just the copy morphism, $L$ is just a delete, and using the comonoid laws, this can be deformed to show that
$r_\pi\comp L^\sharp_{\pi\comp C}\comp i_{\pi\comp C} \asequal{\pi\comp C} C$,
\begin{center}
\includegraphics[scale=0.08]{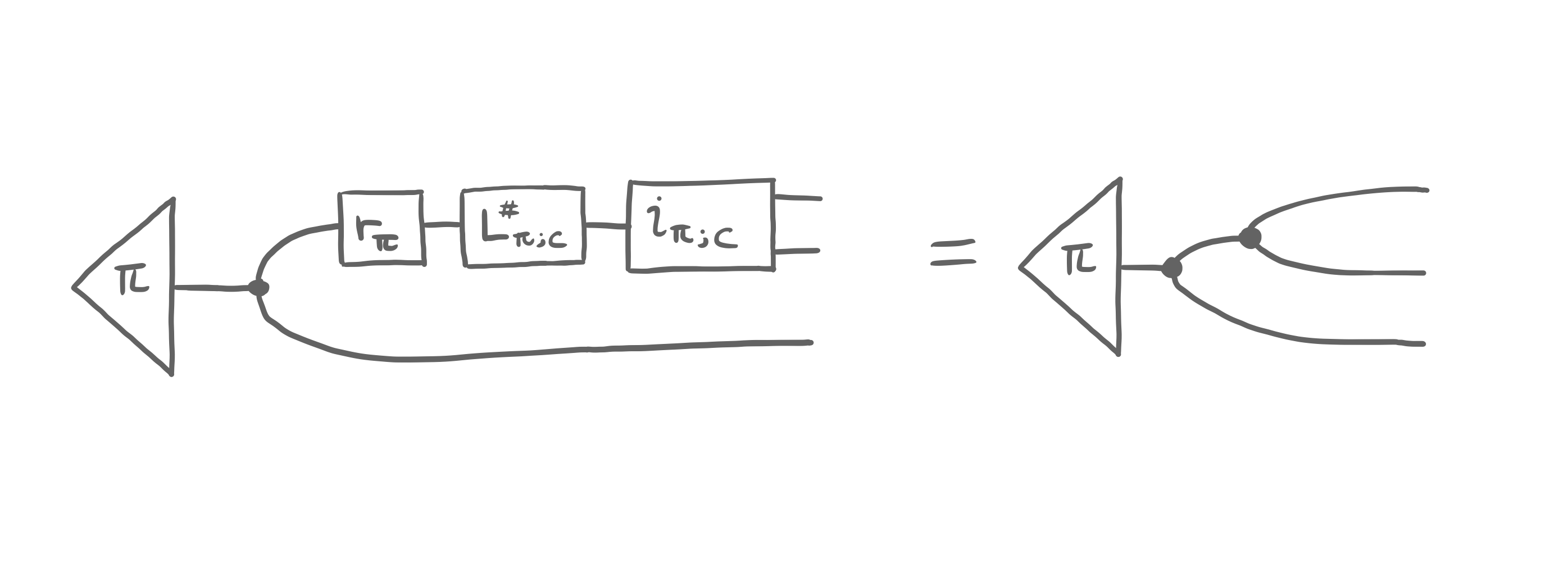}
\end{center}
Then, precomposing with $i_{\pi}$ we have
$L^\sharp_{\pi\comp C}\comp i_{\pi\comp C} = i_\pi\comp C$.
A similar argument using the inverse of $C$ at $\pi$ gives us that
$C^\sharp_\pi\comp i_\pi = i_{\pi\comp C} \comp L$.

Both equations can be drawn as commutative squares which paste together in two ways:
\[\begin{tikzcd}
	{(X\otimes X)_{\pi\comp C}} & {X_\pi} & {(X \otimes X)_{\pi\comp C}} && {X_\pi} & {(X \otimes X)_{\pi}} & {X_\pi} \\
	{X \otimes X} & X & {X \otimes X} && X & {X \otimes X} & X
	\arrow["{L^\sharp_{\pi\comp C}}", from=1-2, to=1-3]
	\arrow["{i_{\pi\comp C}}", from=1-3, to=2-3]
	\arrow["{i_\pi}"', from=1-2, to=2-2]
	\arrow["C"', from=2-2, to=2-3]
	\arrow["{i_{\pi\comp C}}"', from=1-1, to=2-1]
	\arrow["{C^\sharp_{\pi}}", from=1-1, to=1-2]
	\arrow["L"', from=2-1, to=2-2]
	\arrow["{L^\sharp_{\pi\comp C}}", from=1-5, to=1-6]
	\arrow["{C^\sharp_\pi}", from=1-6, to=1-7]
	\arrow["{i_\pi}"', from=1-5, to=2-5]
	\arrow["{i_{\pi\comp C}}"', from=1-6, to=2-6]
	\arrow["{i_\pi}", from=1-7, to=2-7]
	\arrow["C"', from=2-5, to=2-6]
	\arrow["L"', from=2-6, to=2-7]
\end{tikzcd}\]
Reading off the composite diagrams we have that
\begin{equation}
L^\sharp_{\pi\comp C} \comp C^\sharp_{\pi} \comp i_\pi = i_\pi \comp C \comp L
\end{equation}
\begin{equation}
C^\sharp_\pi \comp L^\sharp_{\pi\comp C} \comp i_{\pi\comp C} = i_{\pi\comp C} \comp L \comp C
\end{equation}
We already have that $C \comp L = \id_X$ so, postcomposing with $r_\pi$, equation (1) reduces to
$L^\sharp_{\pi\comp C}\comp C^\sharp_\pi = \id_{X_\pi}$.
Since $L$ and $C$ are only 1-sided inverses in general, equation (2) does not reduce as directly.
However, drawing the relevant string diagrams, it is easy to see that $L \comp C \asequal{\pi\comp C} \id_{X\otimes X}$:
\begin{center}
\includegraphics[scale=0.08]{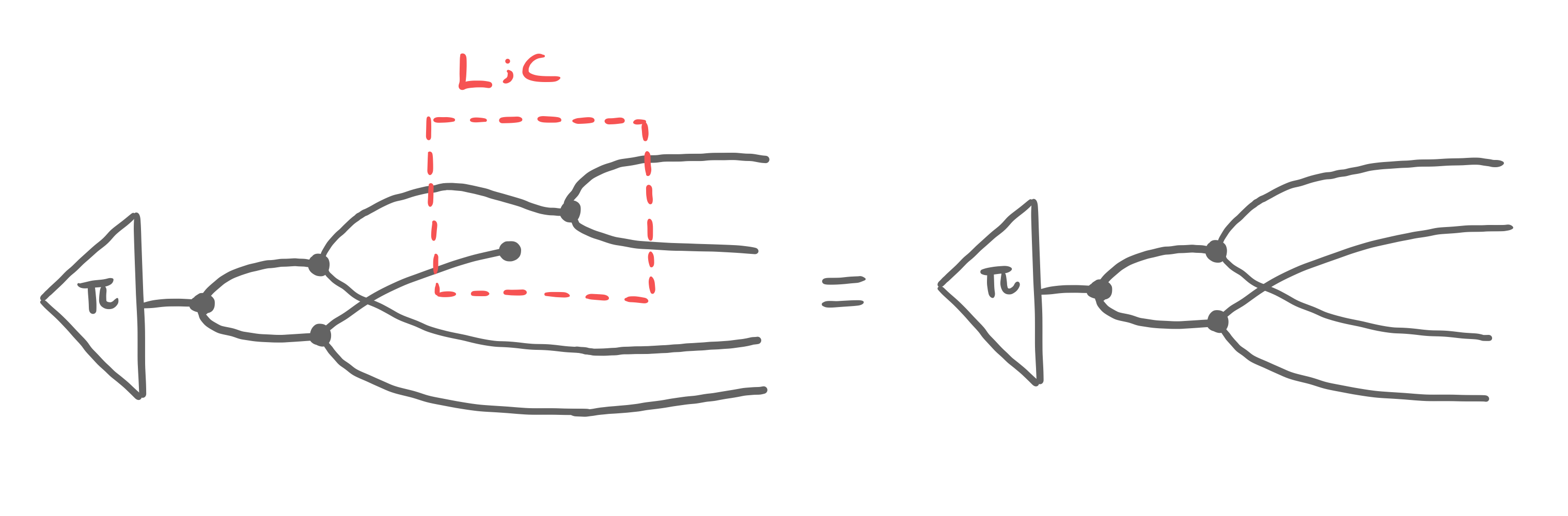}.
\end{center}
Hence we have
$i_{\pi\comp C}\comp L \comp C = i_{\pi\comp C}$
and so equation (2) reduces to
$C^\sharp_\pi\comp L^\sharp_{\pi\comp C} = \id_{(C \otimes C)_{\pi\comp C}}$.
\end{proof}

The fact that the Bayesian inverse to copying is defined only on the diagonal should make intuitive sense.
Indeed, if we had a stochastic process which performs a copy, we would expect that some observation of the output would agree on both copies.
Having support objects in the fibres allows us to encode a guarantee of this in the type system of the bidirectional processes modeled by dependent Bayesian lenses:
the Bayesian inverse of copying is an isomorphism witnessing the equality of its two inputs.
This was in fact one of the main motivations for defining them in this way.

\section{Further Work}

\subsection{Stochastic Dynamical Systems}

As mantioned earlier, the assembly of dependent Bayesian charts and lenses, along with their sections, fit nicely into Myers' categorical systems theory \cite{categorical_systems_theory}. \cite{djm_dynamical_systems} already explains how Markov decision problems can be formulated in this framework using a probability monad, but the way in which Bayesian lenses tie into this has not been explored.
We believe that various Bayesian filtering algorithms can be formulated in this way.

\subsection{Probabilistic Programming Languages}
A motivation for studying Bayesian lenses is that they provide a nice setting for discussing automatic Bayesian inversion.
Such a procedure could be implemented in a probabilistic programming language akin to automatic differentiation in other languages.
As such, we are led to wonder what an internal language of $\BayesLens(\Ca)$ could look like.

One option would be to work entirely within the image of the inversion functor $S$, essentially hiding the backwards pass from the programmer and generating it automatically, and possibly adding some additional `external' mechanisms to control sampling or updating of priors.

A less limiting approach would expose the backwards pass to the programmer allowing for customisation of numerical methods used or even making more of the lens category available so that the update morphisms can do more than just Bayesian inversion.
Indeed a similar language has been used to describe open games \cite{open_games} and has been implemented as a DSL embedded in Haskell \cite{open_game_engine}.
It is mentioned in the previous section how the inverse to copying can be seen to make use of dependent types to require a proof of equality in its input.
We expect there could be further applications of the distribution-dependent types modelled by dependent Bayesian lenses and charts if they were exposed in the type system of such a language.

We expect there may be a middle ground between the simplicity of a language with automatic inversion and the expressiveness of the Open-Game Engine which allows the user to make use of the richness of the category of Bayesian lenses while still allowing for ergonomic Bayesian updating.
For example \cite{exact_conditioning} describes a functional language with an operation allowing the user to condition distributions on terms in the language with semantics in a copy-delete category $\mathbf{Cond}(\Ca)$ constructed from a Markov category $\Ca$.
$\mathbf{Cond}(\Ca)$ differs from lenses in that conditioning makes use of effects, such that Bayesian updating changes the semantics of terms, but the string diagrams representing inversion seem similar to diagrams for lenses.
We intend to investigate whether this (or a similar language) could be given semantics using Bayesian lenses, and whether there is a relationship between the semantic categories.

\begin{figure*}[h!]
\centering
\includegraphics[scale=0.15]{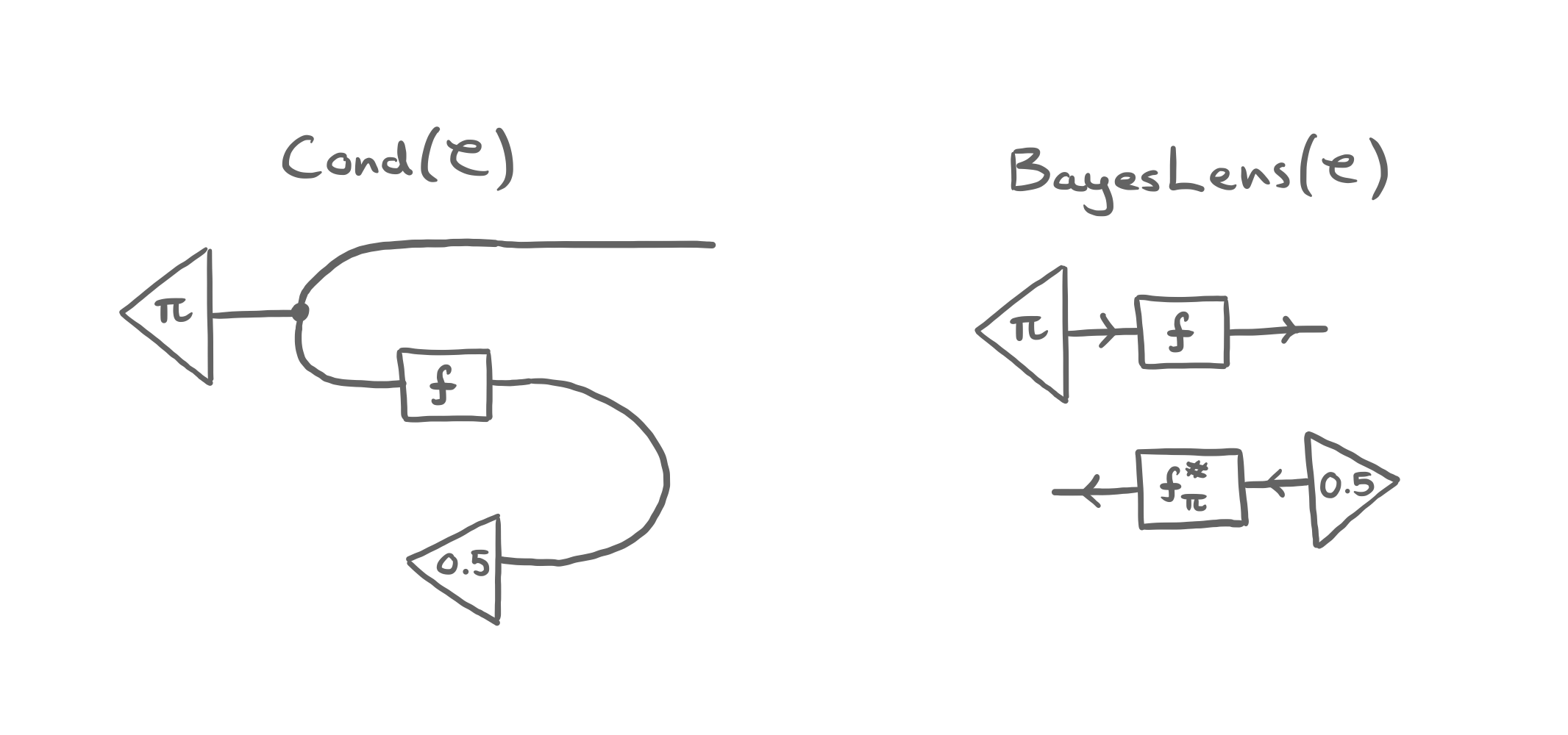}

Figure: Diagrams representing the updated distribution $f^\sharp_\pi(0.5)$ in the categories $\mathbf{Cond(\Ca)}$ and $\BayesLens(\Ca)$ respectively
\end{figure*}

\subsection{Dependent Optics}

Recent works
\cite{fibre_optics}\cite{dependent_optics}\cite{seeing_double}
have proposed definitions for categories of \emph{dependent optics} which simultaneously generalise optics and functor-lenses.
However the inclusion of functor lenses into dependent optics is in some ways ``operationally'' unsatisfactory.
That is to say, the canonical way to obtain dependent lenses from an $F$-lens, gives lenses where the fibres of the category in the forward direction are trivial.

This triviality in the optic representation of $F$-lenses limits our ability to view the operation of Bayesian lenses `optically' where we would have transformations in two directions sharing data via a residual object.
Such concerns are relevant to the complexity of implementations of Bayesian lenses.
However, there may be alternative ways to represent specific examples of $F$-lenses (i.e. where $F$ is a fixed functor) as dependent optics.
Indeed regular lenses over a cartesian category can be represented as dependent optics in two ways: one as a representation of optics with the cartesian monoidal product, and the other the a representation of lenses realised as $F$-lenses where $F$ is an indexed category over a category of comonoids which gives rise to ordinary lenses.

This leads us to the question of whether Bayesian lenses can be represented in another way which is more operationally insightful.

\subsection{Categories with Supports}

We make heavy use of categories where we have assumed the existence of support objects and Bayesian inversion.
However we do not have many good examples of Markov categories satisfying these properties.
We mentioned already that $\mathbf{FinStoch}$ and $\mathbf{Gauss}$ are such examples but we would like to find further examples that may be used in this framework:
possibly categories of certain `nice' measure spaces or other families of distributions like $\mathbf{Gauss}$.

\bibliography{generic}

\end{document}